\newcommand{\bR}{{\mathbb{R}}}
\newcommand{\bN}{{\mathbb{N}}}
\newtheorem{theorem}{Theorem}[section]
\newtheorem{lemma}{Lemma}[section]
\newtheorem{corollary}{Corollary}[section]
\theoremstyle{definition} 
\newtheorem{remark}{Remark}[section]
\begin{document}


\title{Illuminating and covering convex bodies}

\author{Horst Martini\\
\small Technische Universit\"at Chemnitz\\
\small Fakult\"at f\"ur Mathematik\\
\small D-09107 Chemnitz, Germany\\
{\small\tt horst.martini@mathematik.tu-chemnitz.de}\\[2ex]
Christian Richter\\
\small Friedrich-Schiller-Universit\"at Jena\\
\small Mathematisches Institut\\
\small D-07737 Jena, Germany\\
{\small\tt christian.richter@uni-jena.de}\\[2ex]
Margarita Spirova\\
\small Technische Universit\"at Chemnitz\\
\small Fakult\"at f\"ur Mathematik\\
\small D-09107 Chemnitz, Germany\\
{\small\tt margarita.spirova@mathematik.tu-chemnitz.de}}

\date{\today}

\maketitle

\begin{abstract}
Covering numbers of convex bodies based on homothetical copies and related illumination numbers are well-known in combinatorial geometry and,
for example, related to Hadwiger's famous covering problem. Similar numbers can be defined by using proper translates instead of
homothets, and even more
related concepts make sense. On these lines we introduce some new covering and illumination numbers of convex bodies, present
their properties and compare them with each other as well as  with already known numbers. Finally, some suggestive examples illustrate that these new
illumination numbers are interesting and non-trivial.
\end{abstract}

\textbf{Keywords:} parallel illumination, central illumination, illumination numbers, translative coverings, covering numbers, Hadwiger's covering problem

\textbf{MSC(2010):} 52A20, 52A37, 52A40, 52C17


\section{Introduction}

Our considerations refer to the combinatorial geometry of convex bodies.
The famous covering problem of Hadwiger asks for the minimum number of
smaller homothetical copies of a convex body $K$ in $\mathbb{R}^n$ necessary to
cover $K$. Due to ideas of Boltyanski and Hadwiger (see the surveys
\cite{Be-1}, § 34 in \cite{B-M-S}, \cite{Ma-So}, \cite{Sz}, \cite{Be-2}, and Chapters 3 and 9 of \cite{Be-3}), this problem can be equivalently formulated in terms of parallel or suitable central illumination of $K$, where the minimal number of  directions or light sources illuminating the whole of  $K$  is equal to the minimal number of homothets covering it. There are further  related (but not equivalent)  covering problems referring to convex bodies, such as covering a convex body $K$ by the minimal number of proper translates of it (see, e.g., \cite{L-M-S}), and many natural modifications; see again the surveys above and, furthermore, \cite{So}, \cite{Sz-Ta}, \cite{We}, \cite{Bo-Ma-So},   \cite{De}, \cite{M-W-1}, \cite{Ma-We},   \cite{Swa}, \cite{Be-B-Ki},  \cite{L-So},  \cite{Be-Li},  \cite{Mo-Se},  \cite{Nasz}, and   \cite{Ki-We}. It turns out that, also in this wider sense, again the covering problems can be  expressed in terms of correspondingly equivalent illumination and visibility notions. Inspired by this general observation, we introduce some new interesting covering and illumination problems, which are also pairwise corresponding  to each other. We will collect basic properties of them, and we will also compare these problems with the already known ones. Finally, a collection of examples will show how interesting and non-trivial   these new problems are, such that they certainly will create new research activities.


\section{Notions and definitions}

Let $K \subseteq \bR^n$ be an $n$-dimensional \emph{convex body}, i.e., a compact, convex set with nonempty interior  in Euclidean space $\bR^n$. By $o$ we denote the origin. As usual, we use the abbreviations $\mathrm{conv}$, $\mathrm{bd}$, $\mathrm{cl}$, $\mathrm{int}$, $\mathrm{vert}$, and $\mathrm{dist}$ for convex hull, boundary, closure, interior, vertex set, and distance, respectively.  The following covering and illumination numbers are well-known (see, e.g., \cite{Be-1}, Chapter VI of \cite{B-M-S}, \cite{Sz}, \cite{Ma-So}, \cite{Be-2},  and Chapters 3 and 9 of \cite{Be-3}). Let
\[
\begin{array}{r@{\;}l}
b(K): = \min \{m :& K \mbox{ can be covered by } m \mbox{ smaller}\\
&\mbox{homothetical copies of itself }\}\,,
\end{array}
\]
and let
\[
b'(K) : = \min\left\{m : \exists t_1, \dots, t_m \in \bR^n \left(K \subseteq \bigcup\limits^m_{i=1} {\rm int} (K) + t_i \right)\right\}\,.
\]
We say that the direction $l \in \bR^n \setminus \{o\}$ \emph{illuminates} $x \in K$ if
there exists $\varepsilon > 0$ such that $x + \varepsilon l \in {\rm int}(K)$, and we
introduce the corresponding illumination number
\[
c(K) : = \min\{m : \exists l_1, \dots,l_m \in \bR^n \setminus \{o\} \,\, \forall x \in K \,\,\exists i\,\, (l_i \, \mbox{ illuminates } \, x)\}\,.
\]
Further on, we say that $y \in \bR^n \setminus K$ \emph{c-illuminates} $x \in K$ if $x-y$ illuminates $x$ (i.e., if $x+\varepsilon (x-y)\in {\rm int}(K)$ for some $\varepsilon> 0$, called \emph{central illumination}), and we introduce
\[
c'(K) : = \min \{m: \exists y_1,\dots,y_m \in \bR^n
\setminus K \,\, \forall x \in K \,\,\exists i\,\,(y_i\; c\mbox{-illuminates } x)\}\,.
\]
Theorem 34.3 in \cite{B-M-S} says that for any convex body $K \subseteq \bR^n$ we have
\begin{equation}\label{1}
b(K) = b'(K) = c(K) = c'(K)\,.
\end{equation}
We continue with some notions and results from \cite{L-M-S}. Again $K \subseteq \bR^n$ be a convex body. Let
\[
t(K) : = \min \left\{m : \exists t_1,\dots,t_m \in \bR^n \setminus \{o\} \, \left(K \subseteq \bigcup\limits^m_{i=1} K + t_i \right)\right\}\,.
\]
Thus $t(K)$ is the usual \emph{$t$-covering number} of $K$ (\emph{translative covering}),
considered in \cite{L-M-S}. Note that
\[
t(K) \le b'(K)\,,
\]
and this inequality may be strict, as we can see by the cube $[-1,1]^n \subseteq \bR^n$,
where $t([-1,1]^n)=2$ and $b'([-1,1]^n)=2^n$.

We say that $l \in \bR^n \setminus \{o\}$ \emph{$t$-illuminates} $x \in K$ if $x + \varepsilon l \in K$ for some $\varepsilon > 0$, and we introduce
\[
i(K) : = \min \{m: \exists l_1,\dots,l_m \in \bR^n \setminus \{o\} \,\,
\forall x \in K \,\,\exists i\,\, (l_i\; t\mbox{-illuminates } x)\}\,.
\]
Theorem 3.1 in \cite{L-M-S} says that
\begin{equation}\label{2}
i(K) \le t(K)\,.
\end{equation}
More precisely, $K \subseteq \bigcup \limits^m_{i=1} K + t_i, t_i \not= o$, implies that the system $\{-t_1,\dots,-t_m\}$ $t$-illuminates $K$. Further on, it should be noticed that there are convex bodies $K \subseteq \bR^n,
n \ge 3$, strictly satisfying $i(K) < t(K)$, for example compact double cones over $(n-1)$-balls like $K = {\rm conv}(\{\pm(0,\dots,0,1)\} \cup \{(\xi_1,\dots,\xi_{n-1},0) : \xi^2_1 + \dots + \xi^2_{n-1} \le 1\})$.

Before studying more covering and illumination quantities, we fix
some notation. We write $\|\cdot\|$  for the Euclidean norm in $\mathbb{R}^n$.
The \emph{closed and open ball} of radius $r$ with center $x_0 \in \mathbb{R}^n$ are
$B(x_0,r)=\{x \in \mathbb{R}^n: \|x-x_0\| \le r\}$ and $B^o(x_0,r)=\{x \in
\mathbb{R}^n: \|x-x_0\| < r\}$, respectively, and  $S^{n-1}={\rm bd}\,(B(o,1))= \{x \in \mathbb{R}^n: \|x\|=1\}$ is
%
%
the \emph{unit sphere} of $\mathbb{R}^n$. For every convex body $K \subseteq \mathbb{R}^n$, there
exists a uniquely determined closed ball of minimal radius that contains $K$.
This \emph{circumball} of $K$ is denoted by $B(c_K,R(K))$, i.e., $c_K$ and $R(K)$
are the \emph{circumcenter} and the \emph{circumradius} of $K$, respectively. Note that
\begin{equation}
\label{eq_cK}
c_K \in K\,.
\end{equation}
Indeed, if we assume $c_K \notin K$, then $c_K$ were strictly separated from $K$ by a hyperplane $H$: $K \subseteq {\rm int} (H^-)$, $c_K \in {\rm int} (H^+)$, where $H^-, H^+$ are the closed half-spaces generated by $H$. Then
$K \subseteq B(c_K, R (K)) \cap H^-$. But the circumradius of $B (c_K, R(K)) \cap H^-$ is strictly smaller than  $R(K)$, a contradiction.


\section{The notion of $\mathbf{t}$-central illumination}

We say that $y \in \bR^n \setminus K$ $t$-$c$-\emph{illuminates} $x \in K$ if $x-y$ $t$-illuminates
$x$, i.e., if $x + \varepsilon (x-y) \in K$ for some $\varepsilon > 0$. Due to this, we introduce
\[
c''(K) : = \min \{m: \exists y_1,\dots,y_m \in \bR^n \setminus K \,\, \forall x \in K \,\,\exists i\,\,(y_i \, t\mbox{-}c\mbox{-illuminates } x)\}\,.
\]

\begin{theorem}\label{theo1}
For any convex body $K \subseteq \bR^n$ we have $c''(K) = c'(K)$.
\end{theorem}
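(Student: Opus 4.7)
The direction $c''(K) \le c'(K)$ is immediate from the definitions, since ${\rm int}(K) \subseteq K$ means every $c$-illuminator is automatically a $t$-$c$-illuminator. The content of the theorem lies in the reverse inequality, and my plan is to convert any system of $t$-$c$-illuminators $y_1, \ldots, y_m$ of $K$ into a system of $c$-illuminators of the same size by pushing each $y_i$ further out along the ray from a chosen interior point of $K$ through $y_i$.

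More precisely, I would fix any $p \in {\rm int}(K)$ (which exists because $K$ has nonempty interior) and set $y'_i := 2y_i - p$, the reflection of $p$ in $y_i$. It is a routine convexity check that $y'_i \notin K$: since $p \in {\rm int}(K)$ and $y_i \notin K$, the ray from $p$ through $y_i$ leaves $K$ at some parameter strictly less than $1$, so the point at parameter $2$ stays outside $K$.

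The heart of the argument is the claim that whenever $y_i$ $t$-$c$-illuminates $x$, the perturbed point $y'_i$ in fact $c$-illuminates $x$. To see this I would use the identity
\[
x - y'_i = 2(x - y_i) + (p - x)
\]
and rewrite $x + \varepsilon(x - y'_i)$, for a suitably small $\varepsilon > 0$, as a proper convex combination of the interior point $p$ and the point $k_0 := x + \varepsilon_0(x - y_i) \in K$ supplied by the $t$-$c$-illumination hypothesis. Such a combination lies in ${\rm int}(K)$ by the standard fact that a proper convex combination of an interior point and an arbitrary point of a convex set is interior; hence $x + \varepsilon(x - y'_i) \in {\rm int}(K)$, so $y'_i$ $c$-illuminates $x$. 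Since the $y_i$ $t$-$c$-illuminate all of $K$, the $y'_i$ then $c$-illuminate all of $K$, yielding $c'(K) \le m = c''(K)$.

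The one point needing mild care is that $p$ must be taken in the \emph{interior} of $K$; the circumcenter $c_K$ from \eqref{eq_cK} is not in general sufficient, since it may lie on ${\rm bd}(K)$ (e.g., for a half-ball). Apart from this, the argument is an elementary convexity manipulation, and I foresee no serious obstacle.
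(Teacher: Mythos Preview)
Your proposal is correct and follows essentially the same route as the paper: both push each $t$-$c$-illuminator $y_i$ further out along the ray from an interior point (the paper normalizes to $o\in\mathrm{int}(K)$ and uses $\lambda y_i$, you keep a general $p$ and use $y_i'=p+2(y_i-p)$, which is the same construction), and then verify by an explicit convex-combination identity that the displaced point lands the illumination ray in $\mathrm{int}(K)$. Your version is exactly the paper's Remark~\ref{rem1.1}(A) with $c_i=p$ and $\lambda_i=2$.
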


\begin{proof}
It is clear that  $c''(K) \le c'(K)$. Thus we prove now $c'(K) \le c''(K)$. Without loss of generality, we have $n \ge 2$ (for $n=1, c'(K) = c''(K) = 2$), and also
$o \in {\rm int}\,(K)$. We want to show that if $x \in K$ is $t$-$c$-illuminated by $y \in \bR^n \setminus K$, then $x$ is also $c$-illuminated by $\lambda y$, for $\lambda > 1$.
Since $x + \varepsilon (x-y) \in K$ and $o \in {\rm int}\,(K)$, we have
 \[
 x' = \frac{\lambda}{\lambda + \varepsilon (\lambda-1)} \left(x+\varepsilon(x-y)\right) \in \mbox{ int}(K)\,;
 \]
 note that $\lambda + \varepsilon (\lambda-1) > \lambda > 0$. With $\overline{\varepsilon} = \frac{\varepsilon}{\lambda + \varepsilon(\lambda-1)} > 0$ we have
 \[
 \begin{array}{lll}
 x + \overline{\varepsilon} (x-\lambda y) & = & \frac{1}{\lambda + \varepsilon (\lambda-1)} \left((\lambda + \varepsilon(\lambda-1)) x + \varepsilon(x-\lambda y)\right)\\[1ex]
 & = & \frac{1}{\lambda + \varepsilon (\lambda-1)} \left(\lambda x + \varepsilon \lambda x - \varepsilon \lambda y \right)\\[1ex]
 & = & \frac{\lambda}{\lambda + \varepsilon(\lambda-1)} \left(x + \varepsilon (x-y)\right) = x' \in \mbox{int}\,(K)\,.
 \end{array}
 \]
 Thus $x \in K$ is $c$-illuminated by $\lambda y$.

 Summarizing, we see that if every $x \in K$ is $t$-$c$-illuminated by one of the points $y_1,\dots,y_{c''(K)} \in \bR^n \setminus K$, then each $x \in K$ is also $c$-illuminated by one of
the points $2y_1,\dots,2y_{c''(K)}$, implying $c'(K) \le c''(K)$.
\end{proof}

 \begin{remark}\label{rem1.1}

    \textbf{(A)} More precisely we have shown: if points $y_1,\dots,y_m \in \bR^n \setminus K$ are sufficient for the $t$-$c$-illumination of $K$, and if $\lambda_1,\dots,\lambda_m > 1$ and $c_1,\dots,c_m \in \,{\rm int}\,(K)$,
   then the points $c_i + \lambda_i(y_i-c_i), \, 1 \le i \le m$, are sufficient for the $c$-illumination of $K$.

  \textbf{(B)} The notion of $t$-$c$-illumination is, in some sense, more elementary than that of $c$-illumination, since the light rays have to pass not necessarily through int$(K)$.
\end{remark}


\section{Finer quantities of covering and illumination}

We say that $y \in \bR^n \setminus \{o\}$ $\hat{t}$-\emph{illuminates} $x \in K$ if
$x + y \in K$, and we call this, in verbal form,
\emph{strict} $t$-\emph{illumination} instead of $t$-illumination.

Clearly, we have
\[
\begin{array}{r@{\,\!}l}
t(& K) = \min \left\{m: \exists t_1,\dots,t_m \in \bR^n \setminus \{o\} \;\; \left(K \subseteq \bigcup \limits^m_{i=1} K+t_i\right)\right\}\\[2ex]
& = \min \left\{m : \exists t_1,\dots,t_m \in \bR^n \setminus \{o\} \,\, \forall x \in K \,\,\exists i \left(x-t_i \in K\right)\right\}\\[1ex]
& = \min \left\{m : \exists y_1,\dots,y_m \in \bR^n \setminus \{o\} \,\, \forall x \in K\,\, \exists i \left( y_i\,\, \hat{t}\mbox{-illuminates } x\right)\right\}\\[1ex]
& =  \min \left\{m: \exists r_1,\dots,r_m \in S^{n-1}\,\, \exists \varepsilon > 0 \,\, \forall x \in K\,\, \exists i \left( \varepsilon r_i \,\,\hat{t}\mbox{-illuminates } x \right)\right\}.
\end{array}
\]

For $r \in S^{n-1}$, $\varepsilon > 0$, and $x \in K$, we say that $r$ $\varepsilon$-$t$-\emph{illuminates} $x$ if $x + \varepsilon r \in K$ (\emph{quantified $t$-illumination}). With this notion we obtain
\[
\begin{array}{rcc@{\;}l}
t(K) & = & \min \{m: &
\exists r_1,\dots,r_m \in S^{n-1} \,\,\exists \varepsilon > 0 \,\, \\[.5ex]
&&& \forall x \in K\,\, \exists i \left(r_i \,\, \varepsilon\mbox{-}t\mbox{-illuminates } \, x\right)\}\\[1ex]
& = & \multicolumn{2}{l}{ \min\limits_{\varepsilon > 0} \, i (K, \varepsilon)\,,}
\end{array}
\]
where
\[
i(K,\varepsilon) :=\left\{ \begin{array}{l}

\min \left\{m : \exists r_1,\dots,r_m \in S^{n-1} \, \forall x \in K \, \exists i \left(r_i\,\, \varepsilon\mbox{-}t\mbox{-illuminates } x \right)\right\}\\[.5ex]
\multicolumn{1}{r}{ \mbox{ if the minimum over a non-empty set is meant},}\\[1ex]
\infty \quad \mbox{ otherwise}\,.
\end{array}\right.
\]

\begin{remark}\label{rem3.1}

\textbf{(A)} The number $i(K,\varepsilon)$ can be interpreted as \emph{quantified t-illumination number} of $K$.

\textbf{(B)} We know that $i(K) \le t(K)$ and that there are examples of convex bodies
$K \subseteq \bR^n, n \ge 3$, satisfying $i(K) < t(K)$. So we have
\[
i(K) \le \min \limits_{\varepsilon > 0} i(K,\varepsilon)\,,
\]
and there are examples $K$ with
\[
i(K) < \min \limits_{\varepsilon > 0} i(K, \varepsilon)\,.
\]
\textbf{(C)} Analogously we quantify the classical illumination number $c(K)$. Let
$r \in S^{n-1}$, $\varepsilon > 0$, $x \in K$. We say that $r$ $\varepsilon$-\emph{illuminates} $x$
if $x + \varepsilon r \in {\rm int} (K)$, and we introduce
\[
c(K,\varepsilon) :=\left\{ \begin{array}{l}
\min \left\{m : \exists r_1,\dots,r_m \in S^{n-1} \, \forall x \in K \, \exists i \left(r_i\,\, \varepsilon\mbox{-illuminates } x \right)\right\}\\[.5ex]
\multicolumn{1}{r}{ \mbox{ if the minimum over a non-empty set is meant},}\\[1ex]
\infty \quad \mbox{ otherwise}\,.
\end{array}\right.
\]

\textbf{(D)} Note that $i(K,\varepsilon)$ and $c(K,\varepsilon)$ are metric
%
%
quantities, whereas the numbers $b(K)$, $b'(K)$, $c(K)$, $c'(K)$, $c''(K)$, $t(K)$, and $i(K)$ are invariant under affine transformations of $K$. Thus, suitably extending related problems to normed spaces might be an interesting task.

\textbf{(E)} One could also investigate quantified versions of the numbers
$c'(K)$ and $c''(K)$ of central illumination.
\end{remark}

\begin{theorem}\label{theo3.1}
The following relations  are satisfied for every convex body $K \subseteq \mathbb{R}^n$.
\begin{enumerate}
\item[(i)] For all $\varepsilon_1, \varepsilon_2$ with $0 < \varepsilon_1 < \varepsilon_2$,
%
%
%
%
\[
\begin{array}{ccccc}
c(K) &\le& c(K,\varepsilon_1) &\le& c(K,\varepsilon_2) \\[.5ex]
 \tikz{\draw (3.3mm,0mm) -- (3.3mm,2.6mm) (0mm,2.6mm) -- (1.2mm,.1mm) -- (2.4mm,2.6mm);}
&&
\tikz{\draw (3.3mm,0mm) -- (3.3mm,2.6mm) (0mm,2.6mm) -- (1.2mm,.1mm) -- (2.4mm,2.6mm);}
&&
\tikz{\draw (3.3mm,0mm) -- (3.3mm,2.6mm) (0mm,2.6mm) -- (1.2mm,.1mm) -- (2.4mm,2.6mm);}
\\
i(K) &\le& i(K,\varepsilon_1) &\le& i(K,\varepsilon_2). \\[.5ex]
\end{array}
\]
\item[(ii)] $c(K) = \min\limits_{\varepsilon > 0}c(K,\varepsilon)$.
\item[(iii)] The circumradius $R(K)$ can be expressed as
\[
\sup\{\varepsilon > 0 : i (K,\varepsilon) < \infty\} = \sup\{\varepsilon > 0: c(K,\varepsilon) < \infty\} = R(K)\,.
\]
\end{enumerate}
\end{theorem}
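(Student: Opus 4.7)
The plan is to treat the three parts in sequence, all resting on one common convexity observation: if $x \in K$ and $x + \varepsilon_2 r \in K$ (respectively $\in {\rm int}(K)$), then for every $\varepsilon_1 \in (0, \varepsilon_2)$ the point
\[
x + \varepsilon_1 r = \left(1 - \frac{\varepsilon_1}{\varepsilon_2}\right) x + \frac{\varepsilon_1}{\varepsilon_2}\,(x + \varepsilon_2 r)
\]
lies in $K$ (respectively in ${\rm int}(K)$); the latter because a convex combination of a point of $K$ with a point of ${\rm int}(K)$ with strictly positive weight on the interior point belongs to ${\rm int}(K)$. Applied to part~(i), this yields the two horizontal chains of monotonicity in $\varepsilon$ by simply reusing the same directions, while the leftmost inequalities $c(K) \le c(K, \varepsilon_1)$ and $i(K) \le i(K, \varepsilon_1)$ are immediate from the definitions, and the three vertical $\ge$ inequalities are just ${\rm int}(K) \subseteq K$.

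For part~(ii), part~(i) gives $c(K) \le \min_{\varepsilon > 0} c(K, \varepsilon)$. Conversely, let $l_1, \dots, l_m$ illuminate $K$ with $m = c(K)$ and set $r_i = l_i/\|l_i\|$. The sets
\[
W_{i, \varepsilon} := \{ x \in K : x + \varepsilon r_i \in {\rm int}(K) \},\qquad i = 1, \dots, m,\ \varepsilon > 0,
\]
are relatively open in $K$ by continuity and openness of ${\rm int}(K)$, and by the convexity observation they are monotone, $W_{i, \varepsilon'} \supseteq W_{i, \varepsilon}$ for $\varepsilon' < \varepsilon$. Since each $x \in K$ is illuminated by some $l_i$, it belongs to $W_{i, 1/k}$ for that $i$ and all sufficiently large $k$. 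Hence $\{W_{i, 1/k}\}_{i,k}$ is a relatively open cover of the compact set $K$; combining the extracted finite subcover with monotonicity in $k$ produces a single $k^*$ such that $K = \bigcup_{i=1}^m W_{i, 1/k^*}$, proving $c(K, 1/k^*) \le m = c(K)$.

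For part~(iii), the vertical inequality from (i) yields $\sup\{\varepsilon : c(K, \varepsilon) < \infty\} \le \sup\{\varepsilon : i(K, \varepsilon) < \infty\}$. To bound the latter by $R(K)$: if $\varepsilon > R(K)$ then $c_K + \varepsilon r \notin B(c_K, R(K)) \supseteq K$ for every $r \in S^{n-1}$, and since $c_K \in K$ by \eqref{eq_cK}, the point $c_K$ is not $\varepsilon$-$t$-illuminated by any direction, so $i(K, \varepsilon) = \infty$. To show $\sup\{\varepsilon : c(K, \varepsilon) < \infty\} \ge R(K)$, fix $\varepsilon < R(K)$. For any $x \in K$, $\sup_{y \in K} \|y - x\| \ge R(K) > \varepsilon$ by the defining minimum property of the circumradius; using density of ${\rm int}(K)$ in $K$, pick $y^* \in {\rm int}(K)$ with $\|y^* - x\| > \varepsilon$. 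Then $r = (y^* - x)/\|y^* - x\|$ satisfies
\[
x + \varepsilon r = \left(1 - \frac{\varepsilon}{\|y^* - x\|}\right) x + \frac{\varepsilon}{\|y^* - x\|}\, y^* \in {\rm int}(K),
\]
so $r$ $\varepsilon$-illuminates $x$. A compactness argument analogous to part~(ii) then extracts finitely many such directions covering $K$, giving $c(K, \varepsilon) < \infty$.

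I expect the main obstacle to lie in the lower bound in~(iii): for $\varepsilon < R(K)$ and an arbitrary $x \in K$ one must exhibit a point of ${\rm int}(K)$ at \emph{exactly} distance $\varepsilon$ from $x$. This is handled by combining the minimum-enclosing-radius characterization of $R(K)$ (to produce a far point in $K$), density of ${\rm int}(K)$ in $K$ (to push it strictly inside), and a convex combination back toward $x$ (to rescale the distance to precisely $\varepsilon$).
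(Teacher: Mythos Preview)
Your proof is correct and follows essentially the same architecture as the paper's: the convexity/monotonicity observation for (i), a compactness extraction for (ii), and the circumcenter/minimum-enclosing-ball argument for (iii) are all the same ideas. The one noteworthy difference is in part~(ii): the paper detours through the known equality $c(K)=b'(K)$, then spends a paragraph proving that the translation vectors in the definition of $b'(K)$ may be taken nonzero, and finally normalizes their lengths; your argument is more self-contained, working directly from an illuminating system $l_1,\dots,l_m$ and using the open cover $\{W_{i,1/k}\}$ together with compactness to obtain a common $\varepsilon$. Your route avoids both the appeal to $c(K)=b'(K)$ and the nonzero-translation lemma, at the cost of phrasing the compactness step slightly more abstractly. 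Part~(iii) is identical in substance to the paper's proof, including the observation that $\sup_{y\in K}\|y-x\|\ge R(K)$ for every $x$ (which the paper states as its ``auxiliary statement'' by contraposition).
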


\begin{proof}
(i) Suppose that the vector $r \in S^{n-1}$ $\varepsilon_2$-illuminates
$x \in K$, i.e., $x + \varepsilon_2 r \in  \mathrm{int} (K)$. Then
$x + \varepsilon_1 r \in \mathrm{int} (K)$, because $\varepsilon_1 < \varepsilon_2$
and $K$ is convex, and $r$  $\varepsilon_1$-illuminates $x$. This proves
the monotonicity of $c(K,\cdot)$, and the monotonicity of $i(K,\cdot)$ is obtained analogously. The other inequalities in (i)
are obvious.

(ii) By (\ref{1}),
\[
c(K)=b'(K)=\min \left\{m : \exists t_1,\dots,t_m \in \bR^n \left(K \subseteq \bigcup \limits^m_{i=1}\, {\rm int } (K) + t_i\right)\right\}\,.
\]

\emph{Note:} We can assume that the vectors $t_1,\dots,t_m$ are chosen from $\bR^n \setminus \{o\}$.

For proving that, we suppose $t_m=o$. The open set int$(K) + t_m = {\rm int}(K)$ is needed for covering the compact set
\[
C: = K \setminus \bigcup \limits^{m-1}_{i=1} {\rm int} (K) + t_i\,.
\]
Without loss of generality, we have $C \not= \emptyset$. The continuous function $f: C \to \bR$, given by
\[
f(x) = {\rm dist} \, (x,\bR^n \setminus {\rm int} (K)) = \inf \{\|x-y\| : y \in \bR^n \setminus {\rm int}(K)\}\,,
\]
is everywhere positive and attains its minimum on $C$. Therefore there exists $\delta > 0$ such that
\[
\forall x \in C \,\,({\rm dist} (x, \bR^n \setminus {\rm int}(K)) > \delta)\,.
\]
If we choose $\tilde{t}_m : = (\delta,0,\dots,0) \in \bR^n \setminus \{o\}$, we get with the triangle inequality
\[
\forall x \in C \,\, ({\rm dist} (x,\bR^n \setminus ({\rm int} (K) + \tilde{t}_m)) > 0 )\,,
\]
i.e.,
\[
C \subseteq {\rm int}(K) + \tilde{t}_m
\]
with $\tilde{t}_m \not= o$. Thus
\[
K \subseteq \left(\bigcup \limits^{m-1}_{i=1} {\rm int} (K) + t_i\right) \cup \left({\rm int} (K) + \tilde{t}_m\right),
\]
where the vectors $t_1,\dots,t_{m-1}, \tilde{t}_m$ are different from $o$. This
finishes our note.

We continue the proof  of (ii) with
\[
\begin{array}{rcl}
c(K) &\!=\!& \min \left\{m : \exists t_1,\dots,t_m \in \bR^n \setminus \{o\} \left(K \subseteq \bigcup \limits^m_{i=1} {\rm int}(K) + t_i \right)\right\}\\[2ex]
&\!=\!& \min \left\{m : \exists t_1,\dots,t_m \in \bR^n \setminus \{o\} \, \forall x \in K\,\, \exists i \left(x \in {\rm int}(K) + t_i\right)\right\}\\[1ex]
&\!=\!& \min \left\{ m : \exists y_1,\dots,y_m \in \bR^n \setminus \{o\} \, \forall x \in K\,\, \exists i \left(x+y_i \in {\rm int} (K)\right)\right\}\\[1ex]
&\!=\!& \min \left\{m : \exists r_1,\dots,r_m \in S^{n-1}\, \exists \varepsilon > 0 \, \forall x \in K\,\exists i \left(x+\varepsilon r_i \in {\rm int} (K)\right)\right\}\\[1ex]
&\!=\!& \min \limits_{\varepsilon > 0} c(K, \varepsilon)\,.
\end{array}
\]

(iii) From (i) we know that $c(K,\varepsilon) < \infty$ implies $i (K,\varepsilon) < \infty$.
Hence,
\[
\sup \{\varepsilon > 0 : c (K,\varepsilon) < \infty\} \le \sup \{\varepsilon > 0 : i (K,\varepsilon) < \infty\}\,.
\]
It remains to prove that
\[
\sup \{ \varepsilon > 0 : i (K,\varepsilon) < \infty\} \le R (K) \le \sup \{\varepsilon > 0:  c(K,\varepsilon) < \infty\}\,.
\]

For showing
\[
\sup \{\varepsilon > 0 : i (K,\varepsilon) < \infty \} \le R (K)\,,
\]
we consider $\varepsilon > 0$ with $i(K,\varepsilon) < \infty$. This means
\[
\exists m \,\, \exists r_1,\dots,r_m \in S^{n-1} \,\, \forall x \in K\,\, \exists i\,\, (x + \varepsilon r_i \in K)\,.
\]
Using $x = c_K \in K$ (see (\ref{eq_cK})) and $K \subseteq B(c_K,R(K))$, we find an index $i$ such that
$c_K + \varepsilon r_i \in B (c_K, R(K))$, and this implies $\varepsilon \le R(K)$.

Therefore $\sup \{\varepsilon > 0 : i(K,\varepsilon) < \infty\} \le R (K)$.

For the proof of
\[
R(K) \le \sup \{\varepsilon > 0: c(K,\varepsilon) < \infty\}\,,
\]
let $\varepsilon < R(K)$. We have to show that $c(K,\varepsilon) < \infty$, and we use the \emph{auxiliary statement}
\[
\forall x \in K\,\, \exists y (x) \in {\rm int} \, (K)\,\, (\|x-y(x)\| \ge \varepsilon)\,.
\]

\emph{Assumption}:
$\exists x_0 \in K \,\, \forall y \in {\rm int}\,(K)\,\, \left(\|x_0-y\| < \varepsilon\right)\,$.
Then $\|x_0-y\| \le \varepsilon$ for all $y \in K$ and in turn $K \subseteq B(x_0,\varepsilon)$, a contradiction to $\varepsilon < R(K)$\,.

We complete the proof of $c(K,\varepsilon) < \infty$ with a series of
implications, starting with a consequence of the auxiliary statement:

\[
\begin{array}{cl}
&\forall x \in K \,\,\exists r(x) : = \frac{y(x)-x}{\|y(x)-x\|} \in S^{n-1}\,\, \exists \varepsilon (x):=\| x-y(x) \| \ge \varepsilon \\[1ex]
&\left(x+\varepsilon(x) r (x) = y(x) \in {\rm int} (K)\right)\\
\stackrel{\varepsilon(x) \ge \varepsilon}{\Rightarrow} & \forall x \in K\,\, \exists r(x) \in S^{n-1}\, \left(x+\varepsilon r(x) \in
{\rm int}\,(K)\right)\\[1ex]
\Rightarrow & K \subseteq \bigcup \limits_{x \in K} {\rm int}\,(K) - \varepsilon r (x)\\
\stackrel{K \mbox{\scriptsize~is compact}}{{\!\!\!}\Rightarrow} & \exists m \,\,\exists x_1,\dots,x_m \in K \, \left(K \subseteq \bigcup \limits^m_{i=1} {\rm int}\,(K) - \varepsilon r (x_i)\right)\\[2ex]
\Rightarrow & \exists m \,\, \exists r_1,\dots,r_m \in S^{n-1} \left(K \subseteq \bigcup \limits^m_{i=1} {\rm int}\,(K) - \varepsilon r_i\right)\\[2ex]
\Rightarrow & \forall x \in K \,\,\exists i \in \{1,\dots,m\}\, \left(x \in {\rm int}\,(K) - \varepsilon r_i\right)\\[1ex]
\Rightarrow & \forall x \in K \,\, \exists  i \in \{1,\dots,m\}\, \left(x + \varepsilon r_i \in {\rm int}\,(K)\right)\\[1ex]
\Rightarrow & c(K,\varepsilon) \le m < \infty\,.
\end{array}
\]
\end{proof}

We study the behavior and mutual relations of the functions
$i(K,\cdot), c(K,\cdot) : (0,\infty) \to \bN \cup \{\infty\}$.
So far we know:
\vspace*{-0.3cm}
\begin{enumerate}
\item[(a)] $i(K,\cdot) \le c(K,\cdot)$.
\item[(b)] $i(K,\cdot), c(K,\cdot)$ are monotonically increasing.
\item[(c)] $i(K) \le t(K) = \lim \limits_{\varepsilon \downarrow 0} i(K,\varepsilon) \le \lim \limits_{\varepsilon \downarrow 0} c(K,\varepsilon) = c(K)$,
where we have examples with ``$\not=$'' in both ``$\le$'' estimates.
\item[(d)] $i(K,\varepsilon) \le c(K,\varepsilon) < \infty$ for  $\varepsilon < R(K)$ and $\infty = i(K,\varepsilon) = c(K,\varepsilon)$ for $\varepsilon > R(K)$.
\end{enumerate}
Further facts about $i(K,\cdot)$ and  $c(K,\cdot)$ are given in the following

\begin{theorem}\label{theo3.2}
The following properties are satisfied for every convex body $K \subseteq \mathbb{R}^n$.
\begin{enumerate}
\item[(e)] $i(K,\cdot)$ is left-continuous, i.e., for every $\varepsilon > 0$,
\[
i (K,\varepsilon) = \lim \limits_{\varepsilon' \uparrow \varepsilon} i(K,\varepsilon') \stackrel{(b)}{=} \sup \{i(K,\varepsilon') : 0 < \varepsilon' < \varepsilon\}\,.
\]
\item[(f)] $c(K,\cdot)$ is right-continuous, i.e., for every $\varepsilon > 0$,
\[
c(K,\varepsilon) = \lim \limits_{\varepsilon' \downarrow \varepsilon} c(K,\varepsilon') \stackrel{(b)}{=} \inf \{c(K,\varepsilon') : \varepsilon' > \varepsilon\}\,.
\]
\end{enumerate}
\end{theorem}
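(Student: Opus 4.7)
My plan is to treat (e) and (f) in parallel, since in each case one inequality is free from monotonicity (b) and the real work lies in the reverse direction. Namely, (b) immediately gives $\sup_{\varepsilon'<\varepsilon} i(K,\varepsilon')\le i(K,\varepsilon)$ and $c(K,\varepsilon)\le\inf_{\varepsilon'>\varepsilon}c(K,\varepsilon')$. The guiding observation for the reverse inequalities is topological: the defining condition $x+\varepsilon r\in K$ for $\varepsilon$-$t$-illumination is a \emph{closed} condition in the pair $(r,\varepsilon)$ (because $K$ is closed), while $x+\varepsilon r\in\mathrm{int}(K)$ for $\varepsilon$-illumination is an \emph{open} condition (because $\mathrm{int}(K)$ is open). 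Left-continuity of $i(K,\cdot)$ should emerge from compactness plus closedness of $K$; right-continuity of $c(K,\cdot)$ should emerge from openness of $\mathrm{int}(K)$ plus compactness of $K$.

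For (e), I may assume that $m:=\sup_{\varepsilon'<\varepsilon}i(K,\varepsilon')$ is finite, since otherwise the claim is trivial. Choose any sequence $\varepsilon_k\uparrow\varepsilon$ and, for each $k$, an $m$-tuple $r_1^{(k)},\dots,r_m^{(k)}\in S^{n-1}$ of $\varepsilon_k$-$t$-illuminating directions for $K$ (repeating entries if $i(K,\varepsilon_k)<m$). By compactness of $S^{n-1}$, I extract a subsequence, still indexed by $k$, along which $r_i^{(k)}\to r_i\in S^{n-1}$ for every $i\in\{1,\dots,m\}$ simultaneously. Now fix $x\in K$; for each $k$ pick $i(k)\in\{1,\dots,m\}$ with $x+\varepsilon_k r_{i(k)}^{(k)}\in K$. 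By pigeonhole some index $i^{\star}$ occurs infinitely often, and along the corresponding subsequence $x+\varepsilon_k r_{i^{\star}}^{(k)}\to x+\varepsilon r_{i^{\star}}$, which lies in $K$ by closedness. Hence the single family $r_1,\dots,r_m$ $\varepsilon$-$t$-illuminates every $x\in K$, giving $i(K,\varepsilon)\le m$.

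For (f), I may assume $c(K,\varepsilon)=m<\infty$ and fix a family $r_1,\dots,r_m\in S^{n-1}$ $\varepsilon$-illuminating $K$. For each $x\in K$ pick $i(x)$ with $x+\varepsilon r_{i(x)}\in\mathrm{int}(K)$ and, by openness, choose $\eta(x)>0$ with $B^o(x+\varepsilon r_{i(x)},\eta(x))\subseteq\mathrm{int}(K)$. The triangle inequality then yields that for every $x'\in B^o(x,\eta(x)/2)$ and every $\varepsilon'$ with $|\varepsilon'-\varepsilon|<\eta(x)/2$ we have $x'+\varepsilon' r_{i(x)}\in\mathrm{int}(K)$, i.e., $r_{i(x)}$ $\varepsilon'$-illuminates $x'$. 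Extract from $\{B^o(x,\eta(x)/2):x\in K\}$ a finite subcover $B^o(x_1,\eta(x_1)/2),\dots,B^o(x_N,\eta(x_N)/2)$ of the compact set $K$, set $\delta:=\min_{j}\eta(x_j)/2>0$, and conclude that $r_{i(x_1)},\dots,r_{i(x_N)}\in\{r_1,\dots,r_m\}$ $\varepsilon'$-illuminate $K$ for every $\varepsilon'\in(\varepsilon,\varepsilon+\delta)$. Hence $c(K,\varepsilon')\le m=c(K,\varepsilon)$ on a right neighborhood of $\varepsilon$, which is the desired inequality.

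The only non-cosmetic obstacle is the double extraction of subsequences in (e): first a global extraction making all $m$ direction sequences converge simultaneously, then a pointwise pigeonhole refinement for each $x$. This is harmless, however, because one only has to exhibit, for each $x$ separately, some index $i^{\star}$ from the already fixed family $r_1,\dots,r_m$ that works; the pigeonhole subsequence depends on $x$ but the limit family does not. The asymmetry of the theorem — only left-continuity of $i(K,\cdot)$, only right-continuity of $c(K,\cdot)$ — mirrors exactly the topological asymmetry between $K$ and $\mathrm{int}(K)$ that drove the two arguments.
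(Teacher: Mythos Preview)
Your proof is correct and follows essentially the same approach as the paper: for (e) you use exactly the same compactness-of-$S^{n-1}$ plus pigeonhole plus closedness-of-$K$ argument, and for (f) both you and the paper exploit openness of $\mathrm{int}(K)$ together with compactness of $K$ to push $\varepsilon$ slightly to the right. The only cosmetic difference is in (f), where the paper phrases the compactness step as extracting a finite subcover from $\bigcup_i\bigcup_{\varepsilon'>\varepsilon}\mathrm{int}(K)-\varepsilon' r_i$ and then invokes convexity, whereas you work directly with balls and the triangle inequality; your version is marginally more self-contained since it does not appeal to convexity of $K$.
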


\begin{proof}
For (e), we first consider the case $\sup \{i(K,\varepsilon') : 0 < \varepsilon' < \varepsilon\} = \infty$. Here we obtain $i(K,\varepsilon) = \infty$ by (b).

Now we suppose that $\sup \{i(K,\varepsilon') : 0 < \varepsilon' < \varepsilon\} = m < \infty$.
Property (b) implies
\[
\exists \varepsilon_0 \in (0,\varepsilon) \,\, \forall \varepsilon' \in (\varepsilon_0,\varepsilon)\,\, (i(K,\varepsilon') = m)\,,
\]
and $i(K,\varepsilon) \le m$ has to be proved.

Consider the sequence $\varepsilon_k' : = \varepsilon - \frac{1}{k}$ for $k \ge k_0$ (such that $\varepsilon_k' \in (\varepsilon_0,\varepsilon)$).
Then $i(K,\varepsilon'_k) = m$ is given by the illumination vectors $r^{(k)}_1, \dots, r^{(k)}_m \in S^{n-1}$. We can assume that $r^{(k)}_1 \stackrel{k \to \infty}{\longrightarrow} r_1,\dots, r^{(k)}_m \stackrel{k \to \infty}{\longrightarrow} r_m$, because $S^{n-1}$ is compact.

\emph{Assertion}: The vectors $r_1,\dots,r_m \in S^{n-1}$ assure $i(K,\varepsilon) \le m$.

To see this, let $x \in K$. We have to prove that $x + \varepsilon r_i \in K$ for some
$i \in \{1,\dots,m\}$. The inclusion $x \in K$ implies that
$$
\forall k \ge k_0\,\, \exists i_k \in \{1,\dots,m\} \,\, \left(x + \varepsilon'_k  r^{(k)}_{i_k} \in K \right)
$$
and, therefore,
$(i_k)^\infty_{k=1}$ has a constant subsequence $(i_{k_l})^\infty_{l=1}, \,\, i_{k_l} \equiv i_0$, where
\[
 \forall l\, \left(K \ni x + \varepsilon'_{k_l}  r_{i_{k_l}}^{(k_l)} = x + \left(\varepsilon - \frac{1}{k_l}\right)
r^{(k_l)}_{i_0}\right)\,.
\]
The latter two terms tend to $\varepsilon$ and $r_{i_0}$, respectively, as $l \to \infty$. Since $K$ is closed, we get $x + \varepsilon r_{i_0} \in K$, and (e) is verified.

For (f), we have to show the implication
\[
c(K,\varepsilon) = m < \infty \quad\Rightarrow\quad \exists \varepsilon'_0 > \varepsilon\, \left(c(K,\varepsilon'_0) \le m \right)\,.
\]
To see this, let $c(K,\varepsilon) = m$ be assured by $r_1,\dots,r_m \in S^{n-1}$. Then,
for every $x \in K$, there is $i \in \{1,\dots,m\}$ such that $\left(x + \varepsilon r_i \in {\rm int} (K)\right)$. Hence
\[
K \subseteq
\bigcup \limits^m_{i=1} {\rm int}\,(K) - \varepsilon r_i
\subseteq
\bigcup \limits^m_{i=1}
\bigcup \limits_{\varepsilon' > \varepsilon} {\rm int}\, (K) - \varepsilon' r_i\,.
\]
Since $K$ is compact, we have a finite subcover
\begin{equation}
\label{eq1}
K \subseteq \bigcup \limits^m_{i=1} \bigcup \limits_{\varepsilon' \in \left\{\varepsilon^{(i)}_1, \dots, \varepsilon^{(i)}_{k_i}\right\}} {\rm int}\,(K) - \varepsilon' r_i\,.
\end{equation}
We choose $\varepsilon'_0$ with $\varepsilon < \varepsilon'_0 \le \min \left(\left\{\varepsilon^{(1)}_1,\dots,\varepsilon^{(1)}_{k_1}\right\} \cup \dots \cup \left\{\varepsilon^{(m)}_1,\dots,\varepsilon^{(m)}_{k_m}\right\}\right)$ and claim that
\begin{equation}
\label{eq2}
\forall x \in K\,\, \exists i \in \{1,\dots,m\}\,\, (x + \varepsilon'_0 r_i \in {\rm int}\,(K))\,.
\end{equation}
Indeed, if $x \in K$ satisfies $x \in {\rm int}\,(K)-\varepsilon' r_i$ for some
$\varepsilon' \ge \varepsilon'_0$, then $x+\varepsilon' r_i \in {\rm int}\,(K)$, and this yields
$x+ \varepsilon'_0 r_i \in {\rm int}\,(K)$. Therefore (\ref{eq1}) implies (\ref{eq2}).

Property (\ref{eq2}) gives $c(K,\varepsilon'_0) \le m$, where $\varepsilon'_0 > \varepsilon$, and the proof is complete.
\end{proof}

We continue with looking at the behavior of $i(K,\cdot)$ and $c(K,\cdot)$ close to the critical argument $R(K)$.


\section{The behavior of $i(K,\cdot)$ and $c(K,\cdot)$ at $R(K)$}

From (f) and (d) we obtain
\begin{enumerate}
\item[(g)] $c(K,R(K)) = \infty$.
\end{enumerate}
The study of $i(K,R(K))=\sup\limits_{0 < \varrho < R(K)} i(K,\varrho)$ and of $\sup\limits_{0 < \varrho < R(K)} c(K,\varrho)$ is more involved.

\begin{theorem}\label{theo4.1}
The following are equivalent for every convex body $K \subseteq \mathbb{R}^n$.
\begin{enumerate}
\item[(i)] $i(K,R(K)) < \infty$.
\item[(ii)] $\sup\limits_{0 < \varrho < R(K)} c(K,\varrho) < \infty$.
\item[(iii)] There is a finite set
\[
\{x_1,\dots,x_k\} \subseteq K \cap {\rm bd}(B(c_K,R(K)))
\]
such that $\bigcup \limits^k_{i=1} K + (c_K-x_i)$ covers a neighborhood of $c_K$ in $K$, i.e.,
\[
\exists \delta > 0\, \left(K \cap B^o (c_K,\delta) \subseteq \bigcup\limits^k_{i=1} K + (c_K - x_i)\right)\,.
\]
\item[(iv)] There is a finite set
\[
\{x_1,\dots,x_k\} \subseteq K \cap {\rm bd}(B(c_K,R(K)))
\]
such that $\bigcup \limits^k_{i=1} K + (c_K-x_i)$ covers a neighborhood of $c_K$, i.e.,
\[
\exists \delta > 0\, \left( B^o (c_K,\delta) \subseteq \bigcup\limits^k_{i=1} K + (c_K - x_i)\right)\,.
\]
\end{enumerate}
\end{theorem}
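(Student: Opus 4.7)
The strategy is to close a cycle of implications. The easy steps are $(iv) \Rightarrow (iii)$, immediate from $K \cap B^o(c_K,\delta) \subseteq B^o(c_K,\delta)$, and $(ii) \Rightarrow (i)$, which follows from property~(a) together with the left-continuity of $i(K,\cdot)$ from Theorem~\ref{theo3.2}(e):
\[
i(K,R(K)) \;=\; \sup_{0<\varrho<R(K)} i(K,\varrho) \;\le\; \sup_{0<\varrho<R(K)} c(K,\varrho) \;<\; \infty.
\]

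The engine of the remaining implications is a key lemma: \emph{for every $y \in K$ with $y \neq c_K$ there exists $r \in S^{n-1}$ with $y + R(K) r \in \mathrm{int}(K)$.} Indeed, a farthest point $y^* \in K$ from $y$ satisfies $\|y-y^*\| > R(K)$ strictly, since otherwise $K \subseteq B(y,R(K))$ would make $y$ a circumcenter and force $y = c_K$; perturbing $y^*$ slightly into a nearby interior point $z \in \mathrm{int}(K)$ preserves $\|y-z\| > R(K)$, and the open segment from $y \in K$ to $z \in \mathrm{int}(K)$ lies in $\mathrm{int}(K)$ by a standard convexity fact, placing $y + R(K)(z-y)/\|z-y\|$ in $\mathrm{int}(K)$.

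With this lemma in hand, $(iii) \Rightarrow (i)$ is routine: setting $r_i := (x_i-c_K)/R(K)$ handles $K \cap B^o(c_K,\delta)$, while the openness of the condition ``$y+R(K)r \in \mathrm{int}(K)$'' in $(y,r) \in K \times S^{n-1}$ together with compactness of $K \setminus B^o(c_K,\delta)$ supplies finitely many further directions. For $(i) \Rightarrow (iii)$, let $I_0 := \{i : c_K + R(K) r_i \in K\}$ (nonempty by applying (i) at $c_K$) and $x_i := c_K + R(K) r_i \in K \cap \mathrm{bd}(B(c_K,R(K)))$ for $i \in I_0$. For $i \notin I_0$, the point $c_K + R(K) r_i$ is at positive distance from $K$; by continuity there is a common $\delta > 0$ on which $x + R(K) r_i \notin K$ for $x \in B^o(c_K,\delta)$ and $i \notin I_0$, so the valid index for any $x \in K \cap B^o(c_K,\delta)$ must lie in $I_0$, yielding (iii). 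For $(i) \Rightarrow (ii)$ I adapt the scaling argument of Remark~\ref{rem1.1}(A): for each $x \in K$ with $x + R(K) r_i \in K$, a convex combination of $x + R(K) r_i$ with a fixed point of $\mathrm{int}(K)$ is interior, producing a $\varrho$-illuminating direction close to $r_i$ for $\varrho$ close to $R(K)$; an $\varepsilon$-net/compactness argument on a neighborhood of $c_K$ reduces to finitely many such directions uniformly in $\varrho$, and the key lemma handles the remainder of $K$.

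The main obstacle is $(i) \Rightarrow (iv)$ in the case $c_K \in \mathrm{bd}(K)$. For $y \in B^o(c_K,\delta)$, the condition $y \in \bigcup_i K + (c_K - x_i)$ amounts to the small displacement $y - c_K$ lying in the tangent cone of $K$ at some $x_i$, so $(iii)$ asserts that the tangent cones at the $x_i$'s cover the tangent cone of $K$ at $c_K$, while $(iv)$ asks that they cover all of $\mathbb{R}^n$. The two conditions coincide when $c_K \in \mathrm{int}(K)$. When $c_K \in \mathrm{bd}(K)$, the proof must supplement the $x_i$'s from $(iii)$ by further contact points in $K \cap \mathrm{bd}(B(c_K,R(K)))$ whose tangent cones cover the outward wedge at $c_K$. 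Existence of such supplementary points rests on the classical inclusion $c_K \in \mathrm{conv}\bigl(K \cap \mathrm{bd}(B(c_K,R(K)))\bigr)$ together with an analysis of the supporting hyperplanes at $c_K$, and the reduction to a finite collection follows from compactness of $S^{n-1}$.
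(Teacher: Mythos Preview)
Your implications $(iv)\Rightarrow(iii)$, $(ii)\Rightarrow(i)$, $(i)\Leftrightarrow(iii)$, and your key lemma are all correct and close to the paper's arguments. The genuine gap is in $(i)\Rightarrow(iv)$ (and, for the same reason, in your sketch of $(i)\Rightarrow(ii)$): you correctly locate the difficulty at the case $c_K\in{\rm bd}(K)$, but your proposed fix---supplementing with further contact points whose tangent cones cover the ``outward wedge'' at $c_K$---is not a proof, and it is aimed at the wrong target. When $c_K\in{\rm bd}(K)$ the obstruction is not the outward wedge at all; the uncovered points lie \emph{inside} $K$, along a ray from $c_K$ into ${\rm int}(K)$. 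Concretely (normalizing $B(c_K,R(K))=B(o,1)$), a short separation argument (the paper's Lemma~\ref{lem4.1}) produces $x_0\in{\rm int}(K)$ with $\langle x,x_0\rangle\ge 0$ for all $x\in K$; then for every $x_1\in K\cap S^{n-1}$ and small $\lambda_0>0$ one has $\|\lambda_0 x_0+x_1\|^2=\lambda_0^2\|x_0\|^2+2\lambda_0\langle x_1,x_0\rangle+1>1$, so $\lambda_0 x_0\notin K-x_1$. Hence even the full union $\bigcup_{x\in K\cap S^{n-1}}(K-x)$ misses $\lambda_0 x_0\in K\cap B^o(c_K,\delta)$, and condition~(iii) itself fails (this is Lemma~\ref{lem4.2}).

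The upshot is that the case $c_K\in{\rm bd}(K)$ is vacuous under hypothesis~(i): since you already have $(i)\Rightarrow(iii)$, Lemma~\ref{lem4.2} forces $c_K\in{\rm int}(K)$, and then $(iii)\Rightarrow(iv)$ is immediate. The same missing ingredient undermines your $(i)\Rightarrow(ii)$: your ``convex combination with a fixed interior point'' yields a direction depending on $x$, and the vague $\varepsilon$-net step does not produce a uniform finite system. The paper instead runs $(iv)\Rightarrow(ii)$: once $c_K\in{\rm int}(K)$ and $B^o(c_K,\delta)\subseteq{\rm int}(K)$, every $x\in B^o(c_K,\delta)$ with $x+R(K)r_i\in K$ satisfies $x+\varrho r_i=\frac{R(K)-\varrho}{R(K)}\,x+\frac{\varrho}{R(K)}\,(x+R(K)r_i)\in{\rm int}(K)$ for all $\varrho<R(K)$, using the \emph{same} directions $r_i$; your key lemma plus compactness then handles $K\setminus B^o(c_K,\delta)$. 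So the single missing idea throughout is Lemma~\ref{lem4.2}: conditions (i)--(iv) force $c_K\in{\rm int}(K)$, and the boundary case you struggle with never arises.
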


Examples from the following section will show that $i(K,R(K))$ can be finite, but need not
be.

\begin{proof}[Proof of Theorem~\ref{theo4.1}, $(ii) \Rightarrow (i)$]
We estimate
\[
i(K,R(K)) \stackrel{{\rm (e)}}{=} \sup\limits_{0<\varrho<R(K)} i(K,\varrho) \stackrel{{\rm (a)}}{\le} \sup\limits_{0<\varrho<R(K)} c(K,\varrho) \stackrel{{\rm (ii)}}{<} \infty\,.
\]
\end{proof}

\begin{proof}[Proof of Theorem~\ref{theo4.1}, $(i) \Rightarrow (iii)$]
Let $i (K, R(K)) = m < \infty$. Then there exist $r_1,\dots,r_m \in S^{n-1}$ such that every $x \in K$ satisfies $x+R(K) r_i \in K$ for some $i$, i.e.,
\[
K \subseteq \bigcup \limits^m_{i=1} K-R(K) r_i\,.
\]

By (\ref{eq_cK}), $c_K \in K$; w.l.o.g.,
$c_K \in K - R (K) r_i$, $1 \le i \le k$, and
$c_K \notin K-R (K) r_i$, $k < i \le m$ ($k \in \{1,\ldots,m\}$).
Thus,
\[
U:= K \setminus \bigcup \limits^m_{i=k+1} K - R (K) r_i\,
\]
is a neighborhood of $c_K$ with respect to the relative topology of $K$, and
\[
U \subseteq \bigcup \limits^k_{i=1} K - R (K) r_i \,.
\]
The inclusions $c_K \in K - R (K)r_i$, $1 \le i \le k$, yield $c_K + R(K)r_i \in K$,
where $c_K+R(K)r_i \in {\rm bd}\,(B(c_K,R(K)))$. Therefore
\[
x_i:= c_K + R (K)r_i \in K \cap {\rm bd}\,(B(c_K,R(K)))
\]
for $i \in \{1,\dots,k\}$. Thus we have
\[
 U \subseteq \bigcup \limits^k_{i=1} K-R(K)r_i = \bigcup \limits^k_{i=1} K+(c_K-x_i)\,.
\]
\end{proof}

\begin{lemma}
\label{lem4.1}
Let $K \subseteq \bR^n$ be a convex body such that the origin $o$ belongs to the boundary ${\rm bd}\,(K)$. Then there exists $x_0 \in {\rm int}\,(K)$ such that $\langle x,x_0 \rangle \ge 0$ for all $x \in K$, where $\langle \cdot,\cdot \rangle$ denotes the standard inner product in $\bR^n$.
\end{lemma}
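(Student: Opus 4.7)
My plan is to recast the conclusion as showing that the closed convex cone
$$ N := \{x \in \bR^n : \langle y, x \rangle \ge 0 \text{ for all } y \in K\} $$
intersects $\mathrm{int}(K)$, since any point $x_0 \in N \cap \mathrm{int}(K)$ is precisely what the lemma asks for. I would attack this by contradiction, using the separation theorem.

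First I would collect the basic properties of $N$: it is a non-empty closed convex cone containing $o$. Then, assuming $N \cap \mathrm{int}(K) = \emptyset$, the non-empty open convex set $\mathrm{int}(K)$ and the convex set $N$ are disjoint, and a standard hyperplane separation yields $w \in \bR^n \setminus \{o\}$ and $c \in \bR$ with
$$ \langle x, w \rangle \le c \text{ for all } x \in N \quad \text{and} \quad \langle x, w \rangle \ge c \text{ for all } x \in \mathrm{int}(K)\,. $$

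The core of the argument is to show that $w$ itself must lie in $N$, which will force a contradiction. From $o \in N$ one reads off $c \ge 0$. The cone structure of $N$ then forces $\langle x, w \rangle \le 0$ for every $x \in N$, since otherwise some scaling $tx \in N$ with large $t > 0$ would violate the upper bound $c$. Applying the second inequality on $\mathrm{int}(K)$ and passing to the closure via $K = \mathrm{cl}(\mathrm{int}(K))$ gives $\langle y, w \rangle \ge c \ge 0$ for all $y \in K$, i.e., $w \in N$. Substituting $x = w$ into the bound $\langle x, w \rangle \le 0$ on $N$ yields $\|w\|^2 \le 0$, whence $w = o$, contradicting $w \ne o$.

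The hypothesis $o \in \mathrm{bd}(K)$ is used only to make the statement genuinely non-trivial, because if $o \in \mathrm{int}(K)$ one could simply take $x_0 = o$; the separation argument works uniformly and automatically produces a nonzero $x_0$ in the boundary case. The step I expect to be the real obstacle is upgrading the separation inequality $\langle x, w \rangle \le c$ on the cone $N$ to the sharper bound $\langle x, w \rangle \le 0$, together with passing from $\mathrm{int}(K)$ to $K$ on the other side; it is precisely these two moves, combined, that make the proof close up on itself.
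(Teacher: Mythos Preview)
Your proof is correct and follows essentially the same route as the paper's: define the dual cone, assume it misses $\mathrm{int}(K)$, separate, and observe that the separating vector must itself lie in the cone, forcing $\|w\|^2 \le 0$. The only cosmetic differences are that the paper works with the open cone $L=\{y:\langle x,y\rangle>0\ \forall x\in\mathrm{int}(K)\}$ rather than your closed $N$, and pins down $c=0$ via $o\in\mathrm{cl}(L)\cap\mathrm{cl}(\mathrm{int}(K))$ instead of your cone-scaling argument for $\langle x,w\rangle\le 0$ on $N$.
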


\begin{proof}
Define
\[
L:= \{y \in \bR^n: \forall x \in {\rm int}\, (K)\,\,(\langle x,y \rangle > 0)\}\,.
\]
It is enough to show that $L$ has a common point $x_0$ with ${\rm int}\,(K)$. Suppose that $L \cap {\rm int}\,(K) = \emptyset$.

Of course, $L$ is convex. $o$ can be separated from ${\rm int}\,(K)$ by some hyperplane. Then one normal vector of that hyperplane belongs to $L$. Hence, $L \ne \emptyset$. Moreover, $o$ belongs to the closure ${\rm cl}\,(L)$, because $y \in L$ implies $\lambda y \in L$ for every $\lambda > 0$.

By $L \cap {\rm int}\,(K)= \emptyset$, both sets can be separated: there are $y_0 \in \bR^n \setminus \{o\}$ and $c \in \bR$ such that
\begin{equation}
\label{eq-lem1}
\forall y \in L\,\,( \langle y,y_0 \rangle \le c ) \quad\mbox{ and }\quad \forall x \in {\rm int}\,(K)\,\,
( \langle x,y_0 \rangle > c )\,.
\end{equation}
Note that $c=0$, because $o \in {\rm cl}\,(L)$ and $o \in {\rm cl}\,({\rm int}\, (K))$. Now the right-hand part of (\ref{eq-lem1}) yields $y_0 \in L$, and then the left-hand property gives $\langle y_0,y_0 \rangle \le 0$, a contradiction.
\end{proof}

\begin{lemma}
\label{lem4.2}
Let $K \subseteq \bR^n$ be a convex body such that $c_K \notin {\rm int}\,(K)$. Then
\[
\forall \delta > 0\,\, \left( K \cap B^o(c_K,\delta) \not\subseteq \bigcup\limits_{x \in K \cap {\rm bd}\,(B(c_K,R(K)))} K+(c_K-x) \right)\,.
\]
In particular, $K$ does not satisfy condition (iii) from Theorem~\ref{theo4.1}.
\end{lemma}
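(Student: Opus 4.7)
The plan is to translate coordinates so that $c_K = o$, apply Lemma~\ref{lem4.1} (whose hypothesis $o \in \mathrm{bd}(K)$ is exactly our assumption $c_K \notin \mathrm{int}(K)$), and use the resulting separating vector $x_0$ as the direction along which to exhibit a point of $K$ close to $c_K$ that lies outside every translate $K+(c_K-x)$ with $x \in K \cap \mathrm{bd}(B(c_K,R(K)))$.

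More concretely, after translation assume $c_K=o$, so that $K \subseteq B(o,R(K))$ and $o \in \mathrm{bd}(K)$. Lemma~\ref{lem4.1} provides $x_0 \in \mathrm{int}(K)$ with $\langle z, x_0\rangle \ge 0$ for every $z \in K$. Given $\delta>0$, the segment from $o$ to $x_0$ lies in $K$, so for sufficiently small $\varepsilon>0$ the point $y := \varepsilon x_0$ belongs to $K \cap B^o(o,\delta)$.

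To see that $y$ avoids every translate of $K$ in the union, fix any $x \in K \cap \mathrm{bd}(B(o,R(K)))$. Since $\langle x, x_0\rangle \ge 0$, we compute
\[
\|y+x\|^2 = \|y\|^2 + 2\varepsilon\langle x, x_0\rangle + \|x\|^2 \ge \|y\|^2 + R(K)^2 > R(K)^2,
\]
so $y+x \notin B(o,R(K)) \supseteq K$. Thus $y \notin K-x = K+(c_K-x)$ for every such $x$, which is exactly the non-inclusion claimed. The ``in particular'' statement then follows immediately: condition~(iii) of Theorem~\ref{theo4.1} requires a finite union of such translates to cover $K \cap B^o(c_K,\delta)$, but even the entire (possibly infinite) union fails to do so for any $\delta>0$.

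The only subtle step is the correct choice of a direction that both enters $K$ from $c_K$ and makes a non-negative inner product with every point of $K \cap \mathrm{bd}(B(c_K,R(K)))$; Lemma~\ref{lem4.1} is tailor-made for this, since it supplies a single vector $x_0$ with $\langle z,x_0\rangle\ge 0$ for all $z \in K$, in particular for the relevant extremal points $x$. The rest is a one-line estimate using $\|x\|=R(K)$ and the convexity of $K$ inside its circumball.
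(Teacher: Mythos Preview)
Your proof is correct and follows essentially the same approach as the paper's: after translating so that $c_K=o$, both arguments invoke Lemma~\ref{lem4.1} to obtain $x_0\in{\rm int}(K)$ with $\langle z,x_0\rangle\ge 0$ for all $z\in K$, pick a small positive multiple $\varepsilon x_0\in K\cap B^o(o,\delta)$, and use the norm expansion $\|\varepsilon x_0+x\|^2=\varepsilon^2\|x_0\|^2+2\varepsilon\langle x,x_0\rangle+\|x\|^2>R(K)^2$ to conclude $\varepsilon x_0\notin K-x$ for every $x\in K\cap{\rm bd}(B(o,R(K)))$. The only cosmetic differences are that the paper normalizes $R(K)=1$ and phrases the argument by contradiction, whereas you keep $R(K)$ general and argue directly.
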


\begin{proof}
We can assume that $B(c_K,R(K))=B(o,1)$. By (\ref{eq_cK}),
we obtain $o=c_K \in {\rm bd}\,(K)$. Then Lemma~\ref{lem4.1} gives $x_0 \in {\rm int}\,(K)$ such that
\begin{equation}
\label{eq-lem2}
\forall x \in K\,\,(\langle x,x_0 \rangle \ge 0)\,.
\end{equation}

Now suppose that the claim of Lemma~\ref{lem4.2} is false, i.e.,
\[
K \cap B^o(o,\delta) \subseteq \bigcup\limits_{x \in K \cap {\rm bd}\,(B(c_K,R(K)))} K-x
\]
for some $\delta > 0$. Since the half-line $\{\lambda x_0: \lambda > 0\}$ emanates from $o \in K$ and passes through $x_0 \in K$, there exists $\lambda_0 > 0$ such that $\lambda_0 x_0 \in K \cap B^o(o,\delta)$. Thus there is $x_1 \in K \cap {\rm bd}\,(B(c_K,R(K)))$ such that $\lambda_0 x_0 \in K-x_1$. This yields
\[
\lambda_0 x_0 + x_1 \in K \subseteq B(c_K,R(K))=B(o,1)\,,
\]
and therefore, by the aid of $\lambda_0\|x_0\|>0$, $\|x_1\|=1$, and (\ref{eq-lem2}),
\[
1 \ge \|\lambda_0 x_0 +x_1 \|^2=
\lambda_0^2\|x_0\|^2+2\lambda_0\langle x_1,x_0 \rangle+ \|x_1\|^2 > 0+0+1\,,
\]
a contradiction.
\end{proof}

\begin{proof}[Proof of Theorem~\ref{theo4.1}, $(iii) \Rightarrow (iv)$]
By (iii) and Lemma~\ref{lem4.2}, $K$ contains a neighborhood of $c_K$. Therefore (iii) implies (iv).
\end{proof}

\begin{proof}[Proof of Theorem~\ref{theo4.1}, $(iv) \Rightarrow (ii)$]
Condition (iv) implies (iii), and therefore Lemma~\ref{lem4.2} yields $c_K \in {\rm int}\,(K)$. Thus we can assume that $\delta$ in condition (iv) is small enough such that $B^o(c_K,\delta) \subseteq {\rm int}\,(K)$.
Hence we are given
\[
\{x_1,\ldots,x_k\} \subseteq K \cap {\rm bd}\,(B(c_K,R(K)))
\]
and $\delta > 0$ such that
\[
B^o(c_K,\delta) \subseteq {\rm int}\,(K) \cap \bigcup\limits_{i=1}^k K+(c_K-x_i)\,.
\]
Since $\|x_i-c_K\| = R(K)$, we have $r_i:=\frac{x_i-c_K}{R(K)} \in S^{n-1}$ and
\[
B^o (c_K,\delta) \subseteq {\rm int}\,(K) \cap \bigcup\limits^k_{i=1} K-R (K) r_i\,.
\]
Pick an arbitrary $x \in B^o(c_K,\delta)$. Then $x \in {\rm int}\,(K)$, and there exists $i \in \{1,\ldots,k\}$ such that $x \in K-R(K)r_i$, i.e., $x+R(K)r_i \in K$. Hence, for every $\varrho \in (0,R(K))$, $x + \varrho r_i=
\frac{R(K)-\varrho}{R(K)}x+ \frac{\varrho}{R(K)}(x+R(K)r_i) \in {\rm int}\,(K)$. We obtain
\begin{equation}
\label{*}
\forall x \in B^o(c_K,\delta)\,\, \exists i \in \{1,\ldots,k\}\,\, \forall \varrho \in (0,R(K))\,\,(x+\varrho r_i \in {\rm int}\,(K))\,.
\end{equation}

Next we consider points from $K \setminus B^o(c_K,\delta)$. Note that
\[
\forall x \in K \setminus B^o(c_K,\delta)\,\, \exists z (x) \in {\rm int}\,(K) \,\, \left(\|x-z(x)\|=R(K)\right)\,.
\]
(This holds, since otherwise there existed $x \in K \setminus B^o(c_K,\delta)$ such that every $z \in {\rm int}\, (K)$ would
satisfy $\|x-z\| < R(K)$, hence every $z \in K$ satisfied $\|x-z\| \le R(K))$,
and so $K \subseteq B(x,R(K))$. Then $x = c_K$, contradicting $x \notin B^o (c_K,\delta)$.)
We obtain $x =z(x)+(x-z(x)) \in {\rm int}\,(K)+(x-z(x))$.
Hence
\[
 K \setminus B^o (c_K,\delta) \subseteq \bigcup \limits_{x \in K \setminus B^o(c_K,\delta)} \, {\rm int}\,(K) + (x-z(x))
\]
and, since $K \setminus B^o(c_K,\delta)$ is compact, there exist finitely many
vectors $x_{k+1}$, $\dots$, $x_m \in K \setminus B^o(c_K,\delta)$ such that
\[
K \setminus B^o(c_K,\delta) \subseteq \bigcup \limits^m_{i=k+1}
{\rm int}\,(K) + (x_i - z(x_i))\,.
\]
Now we set $r_i:= \frac{z (x_i) - x_i}{R(K)} \in S^{n-1}$, $k+1 \le i \le m$.
We get
\[
  K \setminus B^o (c_K,\delta) \subseteq \bigcup \limits^m_{i=k+1} {\rm int}\, (K) - R(K) r_i\,.
\]
That is, for every $x \in K \setminus B^o(c_K,R(K))$, there exists $i \in \{k+1,\ldots,m\}$ such that $x+R(K)r_i \in {\rm int}\,(K)$. This yields $x+\varrho r_i= \frac{R(K)-\varrho}{R(K)}x+ \frac{\varrho}{R(K)}(x+R(K)r_i) \in {\rm int}\,(K)$ for $0 < \varrho < R(K)$. Hence
\[
\forall x \in K \setminus B^o(c_K,\delta)\,\, \exists i \in \{k+1,\ldots,m\}\,\, \forall \varrho \in (0,R(K))\,\,(x+\varrho r_i \in {\rm int}\,(K))\,.
\]

The last property and (\ref{*}) show that $c(K,\varrho) \le m$ for $0 < \varrho < R(K)$. This proves (ii).
\end{proof}

Theorem~\ref{theo4.1} implies several necessary conditions.

\begin{corollary}
\label{cor4.1}
If a convex body $K \subseteq \bR^n$ satisfies conditions (i)-(iv) from Theorem~\ref{theo4.1}, then
\begin{enumerate}
\item[($\alpha$)] every open half-sphere of ${\rm bd}\, (B(c_K,R(K)))$ contains an element of $K$,
\item[($\beta$)] $c_K \in {\rm int}\,(K)$,
\item[($\gamma$)] the affine hull of $K \cap {\rm bd}\, (B(c_K,R(K)))$ is $\bR^n$, and
\item[($\delta$)] $K \cap {\rm bd}\, (B(c_K,R(K)))$ contains at least $n+1$ points.
\end{enumerate}
\end{corollary}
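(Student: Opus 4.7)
My plan is to prove the four claims in the order $(\beta)$, $(\alpha)$, $(\gamma)$, $(\delta)$, since each builds on the previous. The common strategy for $(\alpha)$ and $(\gamma)$ is to assume the contrary, which traps all contact points $K \cap {\rm bd}\,(B(c_K,R(K)))$ on one side of a hyperplane; then probing near $c_K$ via condition (iv) in a carefully chosen direction forces a translated contact point outside the circumball, a contradiction.

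First I dispatch $(\beta)$: this is immediate from Lemma~\ref{lem4.2}, which says that $c_K \notin {\rm int}\,(K)$ is incompatible with condition (iii). For $(\alpha)$ I argue by contradiction: suppose some open half-sphere in direction $v \in S^{n-1}$ contains no point of $K$, so every $y \in K \cap {\rm bd}\,(B(c_K,R(K)))$ satisfies $\langle v, y - c_K \rangle \le 0$. Invoke (iv) to obtain $x_1, \dots, x_k \in K \cap {\rm bd}\,(B(c_K,R(K)))$ and $\delta > 0$ with $B^o(c_K,\delta) \subseteq \bigcup_{i=1}^k K + (c_K - x_i)$. For each $\varepsilon \in (0,\delta)$ the point $c_K - \varepsilon v$ lies in some $K + (c_K - x_i)$, forcing $x_i - \varepsilon v \in K \subseteq B(c_K,R(K))$. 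Expanding $\|(x_i - c_K) - \varepsilon v\|^2 \le R(K)^2$ and using $\|x_i - c_K\| = R(K)$ collapses to $\varepsilon \le 2\langle v, x_i - c_K \rangle \le 0$, contradicting $\varepsilon > 0$.

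For $(\gamma)$, if ${\rm aff}\,(K \cap {\rm bd}\,(B(c_K,R(K))))$ is a proper affine subspace of $\bR^n$, it lies in a hyperplane, so there is a unit $v$ for which $\langle v, y - c_K \rangle = d$ is a single constant as $y$ ranges over $K \cap {\rm bd}\,(B(c_K,R(K)))$. Replacing $v$ by $-v$ if necessary, we may assume $d \le 0$; then the open half-sphere in direction $v$ contains no point of $K$, contradicting $(\alpha)$ just proved. Finally $(\delta)$ is immediate from $(\gamma)$: a subset of $\bR^n$ whose affine hull is $\bR^n$ must contain at least $n+1$ points.

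The main obstacle lies in $(\alpha)$: one must recognize that the productive probe direction is $-v$ rather than $+v$. Probing with $+v$ would be inconclusive, because a contact point $x_i$ with $\langle v, x_i - c_K \rangle < 0$ (strictly below the equator) can absorb the small shift $+\varepsilon v$ and remain inside $B(c_K,R(K))$. It is the reverse probe $-v$ that forces every candidate $x_i$ (on or strictly below the equator) out of the circumball, producing the decisive inequality $\varepsilon \le 0$.
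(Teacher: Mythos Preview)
Your proof is correct and follows essentially the same approach as the paper's: the paper also derives $(\beta)$ from Lemma~\ref{lem4.2}, proves $(\alpha)$ by the identical probe-and-expand-the-norm contradiction (with $x_0$ playing the role of your $-v$), and obtains $(\gamma)$ and $(\delta)$ exactly as you do. The only cosmetic differences are the order (the paper does $(\alpha)$ before $(\beta)$) and the sign convention in $(\alpha)$.
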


\begin{proof}
Without loss of generality, $B(c_K,R(K))=B(o,1)$. In particular, ${\rm bd}\,(B(c_K,R(K)))=S^{n-1}$.

Let us assume that ($\alpha$) fails. Then there exists $x_0 \in \bR^n \setminus \{o\}$ such that the open half-sphere $\{x \in S^{n-1}: \langle x,x_0 \rangle < 0 \}$ does not meet $K$,
\[
\forall x \in K \cap S^{n-1} \,\,( \langle x,x_0 \rangle \ge 0 )\,.
\]
For sufficiently small $\lambda_0 > 0$, the point $\lambda_0 x_0$ belongs to the neighborhood $B^o(o,\delta)$ of $o=c_K$, that is covered by the sets $K-x_i= K+(c_K-x_i)$, $1 \le i \le k$, according to (iv). Hence
$
\lambda_0 x_0 \in K-x_i
$
for some $x_i \in \{x_1,\ldots,x_k\} \subseteq K \cap S^{n-1}$. We obtain
\[
\lambda_0 x_0 + x_i \in K \subseteq B(o,1)
\]
and
\[
1 \ge \|\lambda_0 x_0+x_i\|^2=\lambda_0^2\|x_0\|^2+2\lambda_0\langle x_i,x_0 \rangle+ \|x_i\|^2 > 0+0+1\,,
\]
because $\lambda_0 > 0$, $x_0 \ne o$, $\langle x_i,x_0 \rangle \ge 0$, and
$\|x_i\|=1$. This contradiction proves ($\alpha$).

Condition ($\beta$) is a consequence of Lemma~\ref{lem4.2}. (Alternatively, it follows directly from ($\alpha$) by a separation argument.)

If ($\gamma$) failed, then $K \cap {\rm bd}\,(B(c_K,R(K)))$ would be contained in a hyperplane and, therefore, ($\alpha$) would fail as well.

Of course, ($\gamma$) implies ($\delta$).
\end{proof}

\begin{corollary}
\label{cor4.2}
If $K \subseteq \bR^n$ is an $n$-dimensional convex polytope with vertex set ${\rm vert}\,(K)$, then the set
\[
K \cap {\rm bd}\,(B(c_K,R(K)))= {\rm vert}\,(K) \cap {\rm bd}\,(B(c_K,R(K)))
\]
is finite, and therefore properties (i)-(iv) from Theorem~\ref{theo4.1} are equivalent to
\[
\exists \delta > 0\,\, \left(K \cap B^o (c_K,\delta) \subseteq \bigcup\limits_{v \in {\rm vert}\,(K) \cap {\rm bd}\,(B(c_K,R(K)))} K + (c_K - v)\right),
\]
as well as to
\[
\exists \delta > 0\,\, \left(B^o (c_K,\delta) \subseteq \bigcup\limits_{v \in {\rm vert}\,(K) \cap {\rm bd}\,(B(c_K,R(K)))} K + (c_K - v)\right)\,.
\]
\end{corollary}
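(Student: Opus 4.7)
The plan is to prove the equality $K \cap \mathrm{bd}\,(B(c_K,R(K))) = \mathrm{vert}\,(K) \cap \mathrm{bd}\,(B(c_K,R(K)))$ first; once this is established, both claimed reformulations of (iii) and (iv) follow by direct substitution, since the set on the right is then a finite set and any finite subset of it is contained in it.

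For the equality, the inclusion $\supseteq$ is immediate. For $\subseteq$, I would exploit the strict convexity of the closed ball. Take $x \in K \cap \mathrm{bd}\,(B(c_K,R(K)))$ and suppose $x$ is not a vertex of $K$. Since $K$ is a polytope, $x$ is then not an extreme point of $K$, so we may write $x = \tfrac{1}{2}(y_1+y_2)$ with $y_1, y_2 \in K$ and $y_1 \neq y_2$. Both points satisfy $\|y_i - c_K\| \le R(K)$. Expanding
\[
\|x-c_K\|^2 = \tfrac{1}{4}\|y_1-c_K\|^2 + \tfrac{1}{2}\langle y_1-c_K, y_2-c_K \rangle + \tfrac{1}{4}\|y_2-c_K\|^2,
\]
the Cauchy--Schwarz inequality yields $\|x-c_K\|^2 \le \tfrac{1}{4}(\|y_1-c_K\|+\|y_2-c_K\|)^2 \le R(K)^2$, with equality forcing $y_1-c_K$ and $y_2-c_K$ to be positively parallel of common length $R(K)$, hence $y_1=y_2$. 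This contradicts $y_1 \ne y_2$, so $\|x-c_K\| < R(K)$, contradicting $x \in \mathrm{bd}\,(B(c_K,R(K)))$. Thus $x$ must be a vertex. Finiteness of the set is then immediate from finiteness of $\mathrm{vert}\,(K)$.

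For the equivalences, let $V := \mathrm{vert}\,(K) \cap \mathrm{bd}\,(B(c_K,R(K)))$. If the first displayed condition of Corollary~\ref{cor4.2} holds, then choosing $\{x_1,\dots,x_k\}:=V \subseteq K \cap \mathrm{bd}\,(B(c_K,R(K)))$ gives condition (iii) of Theorem~\ref{theo4.1} directly. Conversely, if (iii) of Theorem~\ref{theo4.1} holds with a finite set $\{x_1,\dots,x_k\} \subseteq K \cap \mathrm{bd}\,(B(c_K,R(K)))$, then by the equality proved above $\{x_1,\dots,x_k\} \subseteq V$, and enlarging the index set from $\{x_1,\dots,x_k\}$ to all of $V$ only enlarges the covering union, so the first displayed condition of Corollary~\ref{cor4.2} holds. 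The same argument handles the equivalence of (iv) with the second displayed condition.

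There is no real obstacle here: once the equality of the two intersection sets is established via strict convexity of the Euclidean ball, the rest is a bookkeeping observation that a finite subset-cover condition is equivalent to the cover condition using the entire (finite) set, because replacing a subfamily of translates of $K$ by a larger family can only enlarge the union.
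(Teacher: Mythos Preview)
Your proposal is correct. The paper in fact gives no proof of this corollary at all: it is stated as an immediate consequence of Theorem~\ref{theo4.1}, with the equality $K \cap {\rm bd}\,(B(c_K,R(K)))= {\rm vert}\,(K) \cap {\rm bd}\,(B(c_K,R(K)))$ and its finiteness taken as evident, and the word ``therefore'' signalling that the reformulations of (iii) and (iv) follow by direct substitution. Your argument supplies precisely the details one would write if asked to justify this --- the strict-convexity computation showing that only extreme points of $K$ can lie on the circumsphere, and the observation that a cover by a finite subfamily is equivalent to the cover by the whole (finite) family $V$ --- so there is nothing to compare.
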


In the two-dimensional setting, condition (iii) from Theorem~\ref{theo4.1} can be formally weakened.

\begin{theorem}
\label{theo4.2}
For a two-dimensional convex body $K \subseteq \bR^2$ the following is equivalent to conditions (i)-(iv) from Theorem~\ref{theo4.1}.
\begin{enumerate}
\item[(v)] The set $\bigcup \limits_{x \in K \cap {\rm bd}\,(B(c_K,R(K)))} K + (c_K-x)$ covers a neighborhood of $c_K$ in $K$, i.e.,
\[
\exists \delta > 0\,\, \left(K \cap B^o (c_K,\delta) \subseteq \bigcup\limits_{x \in K \cap {\rm bd}\,(B(c_K,R(K)))} K + (c_K - x)\right)\,.
\]
\end{enumerate}
\end{theorem}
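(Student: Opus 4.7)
The direction (iii) $\Rightarrow$ (v) is immediate, since the union in (v) is taken over a superset of the finite family used in (iii). I focus on (v) $\Rightarrow$ (iii). After translating and rescaling so that $c_K = o$ and $R(K) = 1$, Lemma~\ref{lem4.2} applied to the negation of (v) yields $o = c_K \in {\rm int}\,(K)$, and one may choose $\delta > 0$ with
\[
B^o(o, \delta) \;=\; K \cap B^o(o, \delta) \;\subseteq\; \bigcup_{x \in X}\,(K - x), \qquad X := K \cap S^1.
\]

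My plan is a local-to-global compactness argument on the compact circle $S^1$. For each direction $r \in S^1$ I aim to produce an open arc $U_r \ni r$ in $S^1$, a radius $\varrho_r > 0$, and finitely many points $x_{r,1}, \ldots, x_{r,k_r} \in X$ such that the open sector
\[
\left\{tr' : r' \in U_r,\; 0 < t < \varrho_r \right\}\;\subseteq\;\bigcup_{j=1}^{k_r}(K - x_{r,j}).
\]
Compactness of $S^1$ would then yield a finite family $x_1, \ldots, x_k \in X$ and a common radius $\varrho > 0$ with $B^o(o, \varrho) \setminus \{o\} \subseteq \bigcup_i (K - x_i)$; since each $K - x_i$ contains $o$, this forces $B^o(o, \varrho) \subseteq \bigcup_i (K - x_i)$, which is (iii).

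To establish the local sector claim at $r$, let $A_x \subseteq S^1$ denote the (closed) arc of unit vectors $r$ with $x + \varepsilon r \in K$ for some $\varepsilon > 0$. If $r$ lies in the $S^1$-interior of $A_{x^*}$ for some $x^* \in X$, a single $x^*$ suffices: an open arc $U_r$ around $r$ inside ${\rm int}_{S^1}(A_{x^*})$ is obtained, and a uniform $\varrho_r > 0$ arises by continuity, since $x^* + tr' \in {\rm int}(K)$ for $r' \in U_r$ and sufficiently small $t$. Otherwise $r$ is a tangential (boundary) direction of $A_x$ for every $x \in X$ with $r \in A_x$; here two-dimensional topology is decisive, since $S^1 \setminus \{r\}$ has exactly two connected components. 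Applying (v) to sequences $r_n \to r$ approaching from each side at a fixed scale $\delta/2$, and passing to limits via compactness of $X$, I obtain two companion points $x', x'' \in X$ with $r \in A_{x'} \cap A_{x''}$ such that $A_{x'}$ and $A_{x''}$ meet $r$ from opposite sides; then $(K - x') \cup (K - x'')$ covers a full two-sided sector about $r$.

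The main obstacle is this second, tangential case: ensuring that the limiting companion points actually deliver the required one-sided coverage needs careful use of closedness of the $A_x$'s and convexity of $K$, and works in $\bR^2$ precisely because each direction has just two sides. The analogous local construction in higher dimensions would require controlling codimension-one families of approach directions, which is why the weakening (iii) $\Leftrightarrow$ (v) is restricted to the plane.
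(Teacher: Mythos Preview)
Your compactness route is genuinely different from the paper's. The paper proves (v)$\Rightarrow$(iv) constructively: it cuts $S^1$ into four quarter-arcs $\Gamma_i$, retains from each $K\cap\Gamma_i$ at most three points (the two closest to the endpoints of $\Gamma_i$ and one more), and verifies by a direct geometric case analysis that the resulting set $H$ of at most $12$ points already satisfies $B^o(o,\nu_0)\subseteq\bigcup_{x\in H}(K-x)$ for an explicit $\nu_0>0$. Your abstract local-to-global plan, even if completed, would not recover the quantitative bound $k\le 12$.

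The real gap is Case~2. Passing to a limit of $x_n$ with $x_n+\tfrac{\delta}{2}r_n\in K$ produces an $x'\in X$ with $r\in A_{x'}$, but gives no information about \emph{which side} of $r$ the arc $A_{x'}$ occupies; your sidedness assertion is precisely the crux, and ``closedness of the $A_x$'s and convexity of $K$'' does not by itself deliver it. What is missing is a constraint specific to the circumcircle that you never invoke: from $\|x_n\|=1$ and $x_n+\tfrac{\delta}{2}r_n\in K\subseteq B(o,1)$ one gets $\langle x_n,r_n\rangle\le -\tfrac{\delta}{4}$, so the antipode $-x_n$ (which always lies in ${\rm int}(A_{x_n})$ because $o\in{\rm int}(K)$) sits within angular distance $\tfrac{\pi}{2}$ of $r$ on the clockwise side. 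If every $A_x$ containing $r$ extended only clockwise, then $r\notin A_{x_n}$ (else $r_n\in A_{x_n}$ would place $r$ in the interior of $A_{x_n}$, contradicting Case~2); but then $A_{x_n}$ would contain both $r_n$ (just counterclockwise of $r$) and $-x_n$ (just clockwise of $r$) while omitting $r$ itself, forcing $|A_{x_n}|>\pi$, impossible for the tangent cone at a boundary point of a convex body. Some such argument is required, and your proposal does not supply one.
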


\begin{proof}
The implication (iii)$\Rightarrow$(v) is trivial.

Now we suppose (v) and shall show (iv). Without loss of generality, we assume $B(c_K,R(K))=B(o,1)$, in particular, ${\rm bd}\,(B(c_K,R(K)))=S^1$.
By Lemma~\ref{lem4.2}, $o=c_K \in {\rm int}\, (K)$. Then (v) gives some $\delta > 0$ such that
\begin{equation}
\label{eq-(v)1}
B(o,\delta) \subseteq \bigcup\limits_{x \in K \cap S^1} K - x\,.
\end{equation}
%
%

We decompose the circle ${\rm bd}\,(B(c_K,R(K)))=S^1$ into four arcs $\Gamma_i$, $1 \le i \le 4$, each representing an angle of size $\frac{\pi}{2}$. With every $\Gamma_i$ we associate a set $H_i \subseteq K \cap \Gamma_i$, namely
\begin{enumerate}
\item $H_i:= K \cap \Gamma_i$ if $|K \cap \Gamma_i| \le 2$,
\item $H_i$ consists of three distinct points $a_i,b_i,c_i \in K \cap \Gamma_i$, where $a_i$ and $b_i$ are those elements of $K \cap \Gamma_i$ that are closest to the end-points of $\Gamma_i$ and $c_i$ is chosen arbitrarily
in $(K \cap \Gamma_i) \setminus\{a_i,b_i\}$, if $|K \cap \Gamma_i| \ge 3$.
\end{enumerate}
We put $H=H_1\cup H_2 \cup H_3 \cup H_4$ and want to prove that
\begin{equation}
\label{eq-(v)2}
\forall r \in S^1\,\, \forall \nu \in (0,\nu_0]\,\, \exists x_0 \in H\,\,
(\nu r + x_0 \in K)\,,
\end{equation}
where
\[
\nu_0 =
\min\left\{\frac{\delta}{2}, \min\big\{ {\rm dist}\,(c_i,{\rm conv}\,\{o,a_i\} \cup {\rm conv}\,\{o,b_i\}): |H_i|=3 \big\} \right\}.
\]
Here $\min \emptyset= \infty$ and ${\rm dist}(a,B)= \inf\{\|a-b\|: b \in B\}$. Clearly, (\ref{eq-(v)2}) implies
\[
B(o,\nu_0) \subseteq \bigcup\limits_{x \in H} K-x \,,
\]
which in turn gives (iv) with the additional bound $k \le 12$.

For the proof of (\ref{eq-(v)2}), let $r \in S^1$ and $\nu \in (0,\nu_0]$ be fixed. By (\ref{eq-(v)1}), there exists $x_r \in K \cap S^1$ such that $\delta r \in K-x_r$. That is,
\[
\delta r + x_r \in K\,.
\]

\emph{Case 1: $x_r \in H$. } Putting $x_0:=x_r$ and using $x_r, \delta r+x_r \in K$ as well as $0 \le \nu \le \nu_0 \le \delta$, we obtain claim (\ref{eq-(v)2}) by
\[
\nu r+x_0= \nu r+x_r \in {\rm conv}\,\{x_r,\delta r+x_r\} \subseteq K\,.
\]

\emph{Case 2: $x_r \notin H$. } By the construction of $H_1,\dots,H_4$, there exists $i \in \{1,2,3,4\}$ such that $x_r$ is located on the arc $\Gamma_i$ strictly between the extremal points $a_i,b_i \in H_i$. Note that we have
\begin{equation}
\label{eq-(v)3}
o,a_i,b_i,c_i,x_r,x_r+\delta r \in K\,.
\end{equation}
For further computations and illustrations we assume $r=(-1,0)$. Then $x_r+\delta r \in K \subseteq B(o,1)$ implies that the first coordinate of $x_r$ is positive.

{\emph{Subcase 2.1: $o$ is in the open slab bounded by $\{a_i+\mu r: \mu \in \bR\}$ and $\{b_i+\mu r: \mu \in \bR\}$} (see Figure~\ref{fig-(v)}). }
\begin{figure}
\begin{center}
\begin{tikzpicture}[scale=2]
  \draw (0,0) node[above] {\tikz[scale=2]{
    \draw (0,0) circle (1) (.6,.8) -- (0,0) -- (.98,-.2) (0,-1) node[below] {\small $S^1={\rm bd}\,(B(c_K,R(K)))$} (-1,0) node[left] {\small $r$} (0,0) node[below left] {\small $o$} (.98,-.2) node[below right] {\small $a_i$} (.6,.8) node[above right] {\small $b_i$} (.74,.68) node[right] {\small $x_r$} (.5,.2) node[below] {\small $x_0+\nu r$} (.98,.2) node[right] {\small $c_i=x_0$} (-.3,.71) node[below] {\small $x_r+\delta r$} (.15,.2) node[above left] {\small $s$};  
    \fill (-1,0) circle (.03) (0,0) circle (.03) (.98,-.2) circle (.03) (.6,.8) circle (.03) (.98,.2) circle (.03) (.5,.2) circle (.03) (.71,.71) circle (.03) (-.3,.71) circle (.03) (.15,.2) circle (.03);
    \draw[densely dotted] (-1,0) -- (0,0) (-1.15,-.2) -- (1.15,-.2) (-1.15,.8) -- (1.15,.8) (.15,.2) -- (.98,.2) (-.3,.71) -- (.71,.71);
  }};
  \draw (3,0) node[above] {\tikz[scale=2]{
    \draw (0,0) circle (1) (.21,.71) -- (0,0) (0,-1) node[below] {\small $S^1={\rm bd}\,(B(c_K,R(K)))$} (-1,0) node[left] {\small $r$} (0,0) node[below] {\small $o$} (.6,.8) (.71,.71) node[above right] {\small $x_r$} (.5,.2) node[below] {\small $x_0+\nu r$} (.98,.2) node[above right] {\small $a_i=x_0$} (-.29,.71) node[below] {\small $x_r+\delta r$} (-.1,.995) node[above] {\small $b_i$} (.21,.71) node[above] {\small $p$} (.06,.2) node[above left] {\small $q$};  
    \fill (-1,0) circle (.03) (0,0) circle (.03) (.98,.2) circle (.03) (.5,.2) circle (.03) (.71,.71) circle (.03) (-.29,.71) circle (.03) (.21,.71) circle (.03) (.06,.2) circle (.03) (-.1,.995) circle (.03);
    \draw[densely dotted] (-1,0) -- (0,0) (-1.15,.2) -- (1.15,.2) (-1.15,.995) -- (1.15,.995) (-.3,.71) -- (.71,.71);
  }};
\end{tikzpicture}
\end{center}
\caption{Subcases 2.1 (on the left) and 2.2 (on the right)}
\label{fig-(v)}
\end{figure}
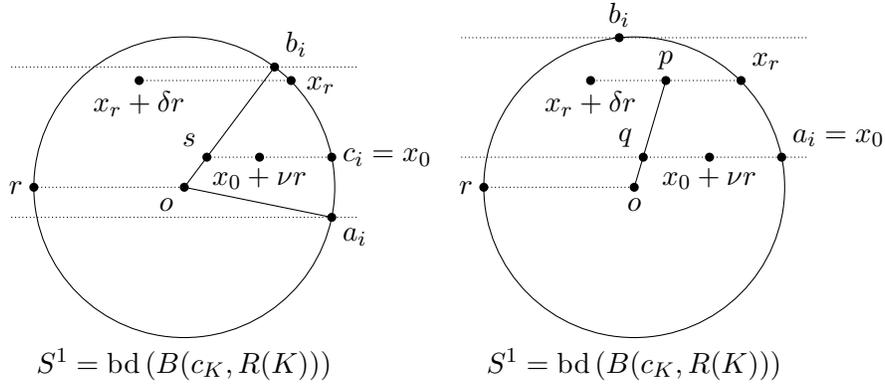
We put $x_0:=c_i \in H_i$. Hence $x_0$ is on $\Gamma_i$ strictly between $a_i$ and $b_i$. Since $\nu \le \nu_0 \le {\rm dist}\,(c_i,{\rm conv}\,\{o,a_i\} \cup
{\rm conv}\,\{o,b_i\})$, $x_0+\nu r$ is on the segment between $x_0=c_i$ and the intersection point $s$ of ${\rm conv}\,\{o,a_i\} \cup
{\rm conv}\,\{o,b_i\}$ with the straight line $\{c_i+\mu r: \mu \in \bR\}$.
Now (\ref{eq-(v)3}) gives the claim $x_0+\nu r \in K$.

{\emph{Subcase 2.2: $o$ is not in the open slab bounded by $\{a_i+\mu r: \mu \in \bR\}$ and $\{b_i+\mu r: \mu \in \bR\}$} (see Figure \ref{fig-(v)}). } Without loss of generality, $a_i=(\cos \alpha, \sin \alpha)$ and $x_r= (\cos \varphi, \sin \varphi)$ with $0 \le \alpha < \varphi < \frac{\pi}{2}$. We put $x_0:=a_i \in H_i$. By (\ref{eq-(v)3}), the midpoint $p$ of $x_r$ and $x_r+\delta r$ belongs to $K$ and has a non-negative first coordinate. Then the intersection point $q$ of the segment $o p$ and the straight line $\{a_i+ \mu r: \mu \in \bR\}$ belongs to $K$, too, and satisfies
\[
\|a_i-q\| > \|p-x_r\|= \left\|\frac{1}{2}\delta r\right\|=\frac{\delta}{2} \ge \nu_0 \ge \nu\,.
\]
Therefore $x_0+\nu r= a_i+\nu r$ belongs to the segment $a_i q$ and hence to $K$ as well. This shows (\ref{eq-(v)2}) and completes the proof of Theorem~\ref{theo4.2}.
\end{proof}

\begin{remark}\label{rem3.2}
\textbf{(A)} Note that the set $H=H_1 \cup H_2 \cup H_3 \cup H_4$ in the above proof contains at most $12$ points. Therefore conditions (iv) and (iii) from Theorem~\ref{theo4.1} can be sharpened by the additional restriction $k \le 12$, provided we are in the two-dimensional situation $n=2$.

\textbf{(B)} Condition (v) from Theorem~\ref{theo4.2} fails to be equivalent to (i)-(iv) from Theorem~\ref{theo4.1} as soon as the dimension $n$ exceeds $2$. This is illustrated by the compact double cone
\[
K:= {\rm conv}\left(\{(0,\dots,0,\pm 1)\} \cup \left\{(\xi_1,\dots,\xi_{n-1},0): \xi_1^2+\dots+\xi_{n-1}^2=1\right\}\right) \subseteq \bR^n,
\]
that is already mentioned after inequality (\ref{2}).
\end{remark}

\begin{proof}[Proof of \textbf{(B)}]
For a vector $x=(\xi_1,\dots,\xi_n) \in \bR^n$, we write $x=(x[<n],x[n])$ where $x[<n]=(\xi_1,\dots,\xi_{n-1})$ and $x[n]=\xi_n$. With this notation,
\[
K=\{x \in \bR^n: \|x[<n]\|+|x[n]| \le 1\},
\]
where, of course, $\|x[<n]\|$ is the Euclidean norm of $x[<n]$ in $\bR^{n-1}$.
Clearly,
$B(c_K,R(K))=B(o,1)$ and
\[
K \cap {\rm bd }(B(c_K,R(K)))= \{(0,\dots,0,\pm 1)\} \cup \left\{(y,0): y \in S^{n-2}\right\}\,.
\]

\emph{Part 1: $K$ satisfies (v). } We shall show that
\[
K \subseteq \bigcup\limits_{x \in \{(0,\dots,0,\pm 1)\} \cup \left\{(y,0): y \in S^{n-2}\right\}} K-x\,.
\]
Let $x_0 \in K$.

\emph{Case 1: $x_0=o$. } Of course, $o=(0,\dots,0,1)-(0,\dots,0,1) \in K - (0,\dots,0,1)$.

\emph{Case 2: $\|x_0[<n]\|<|x_0[n]|$. } The assumption $\|x_0[<n]\|<|x_0[n]|$ implies ${\rm sgn}\,(x_0[n]) \ne 0$ and
\[
\|x_0[<n]\|+|x_0[n]-{\rm sgn}\,(x_0[n])| = \|x_0[<n]\|+(1-|x_0[n]|) \le 1\,,
\]
which gives $(x_0[<n],x_0[n]-{\rm sgn}\,(x_0[n])) \in K$ and
\[
x_0 \in K-(0,\dots,0,-{\rm sgn}\,(x_0[n]))\,.
\]

\emph{Case 3: $\|x_0[<n]\| \ge |x_0[n]|$ and $x_0 \ne o$. } In this case the assumption gives $\|x_0[<n]\| \ne 0$ and
\[
\left\| x_0[<n]- \frac{x_0[<n]}{\|x_0[<n]\|} \right\|+ |x_0[n]| = (1-\|x_0[<n]\|)+|x_0[n]| \le 1\,,
\]
yielding $\left( x_0[<n]- \frac{x_0[<n]}{\|x_0[<n]\|},x_0[n] \right) \in K$ and
\[
x_0 \in K-\left(-\frac{x_0[<n]}{\|x_0[<n]\|},0 \right)\,.
\]

\emph{Part 2: $K$ does not satisfy $(iv)$. } Let
\[
\{x_1,\dots,x_k\} \subseteq K \cap {\rm bd }(B(c_K,R(K)))= \{(0,\dots,0,\pm 1)\} \cup \left\{(y,0): y \in S^{n-2}\right\}
\]
be a finite set. We pick $x_0 \in \left\{(y,0): y \in S^{n-2}\right\} \setminus \{x_1,\dots,x_k\}$ and define
\[
c:= \max(\{\langle x_0[<n], x_i[<n] \rangle: 1 \le i \le k\} \cup \{0\}) \in [0,1)\,,
\]
where $\langle \cdot,\cdot \rangle$ is the standard inner product in $\bR^{n-1}$. For disproving (iv) it is enough to show that the set $\bigcup\limits_{i=1}^k K-x_i=\bigcup\limits_{i=1}^k K+(c_K-x_i)$ and the half-line $\{\mu(-x_0[<n],c): \mu > 0\}$ emanating from $o=c_K$ are disjoint. That is,
\begin{equation}
\label{eqx}
\forall \mu > 0\,\, \forall i \in \{1,\dots,k\}\,\,((-\mu x_0[<n],\mu c)+(x_i[<n],x_i[n]) \notin K )\,.
\end{equation}
To obtain a contradiction, suppose that (\ref{eqx}) fails for a particular choice of $\mu$ and $i$. Then $(-\mu x_0[<n]+x_i[<n],\mu c+x_i[n]) \in K$, i.e.,
\begin{equation}
\label{eq-part2}
\|-\mu x_0[<n]+x_i[<n]\|+|\mu c +x_i[n]| \le 1 \,.
\end{equation}

\emph{Case 1: $x_i=(0,\dots,0,\pm 1)$. } Here (\ref{eq-part2}) amounts to $\mu+|\mu c \pm 1| \le 1$. That is $|\mu c \pm 1| \le 1-\mu$ and gives
\[
1-\mu \ge |\pm 1+\mu c| \ge ||\pm 1|-|\mu c||=|1-\mu c| \ge 1-\mu c\,.
\]
However, this yields $c \ge 1$, a contradiction.

\emph{Case 2: $x_i=(x_i[<n],0)$ with $x_i[<n] \in S^{n-2}$. } Now (\ref{eq-part2}) gives
\[
\|-\mu x_0[<n]+x_i[<n]\|+\mu c\le 1\,.
\]
Hence $\|-\mu x_0[<n]+x_i[<n]\|^2\le (1-\mu c)^2$. Expressing the norm by the inner product and using $\|x_0[<n]\|=\|x_i[<n]\|=1$, we obtain
\[
\mu^2-2\mu\langle x_0[<n],x_i[<n] \rangle +1 \le 1-2\mu c+\mu^2 c^2,\,
\]
and in turn
\[
\mu^2(1-c^2)+2\mu(c-\langle x_0[<n],x_i[<n] \rangle) \le 0\,.
\]
This is impossible, because $\mu > 0$, $0 \le c < 1$, and $c \ge \langle x_0[<n],x_i[<n] \rangle$.
\end{proof}


\section{Examples in \boldmath$\bR^2$\unboldmath}

($\alpha$) \emph{Let $K = B(o,1)$ be a circular disc. Then $K = B(c_K,R(K))$ and}
\[
i(K,\varrho) = \left\{\begin{array}{ll}
3,& 0 < \varrho \le R(K) = 1,\\
\infty,& \varrho > R(K) = 1\,.
\end{array}\right.
\]
In particular, $i(K,R(K)) < \infty$.

Figure~\ref{fig_alpha} illustrates the case $\varrho=1$: a point $x \in B(o,1)$ is $1$-$t$-illuminated by $r \in S^1$ if $x+1r \in B(o,1)$, i.e., if
$x \in B(o,1)-r$. $r_1$, $r_2$, $r_3$ $1$-$t$-illuminate the whole of $B(o,1)$.
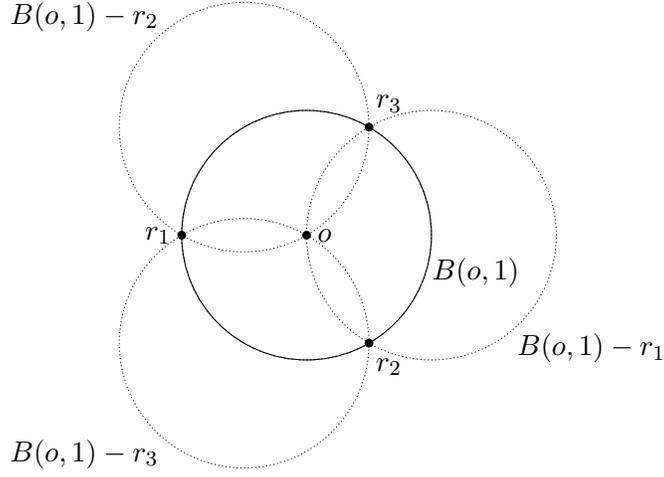
\begin{figure}
\begin{center}
\begin{tikzpicture}[scale=1.66]
  \draw (0,0) circle (1) (0,0) node[right] {\small $o$} (-1,0) node[left] {\small $r_1$} (.47,.9) node[above right] {\small $r_3$} (.47,-.9) node[below right] {\small $r_2$} (.92,-.3) node[right] {\small $B(o,1)$} (1.6,-.7) node[below right] {\small $B(o,1)-r_1$} (-1.1,-1.55) node[below left] {\small $B(o,1)-r_3$} (-1.1,1.55) node[above left] {\small $B(o,1)-r_2$};
  \fill (.5,.866) circle (.035) (.5,-.866) circle (.035) (0,0) circle (.035) (-1,0) circle (.035);
  \draw[densely dotted]  (-.5,.866) circle (1) (-.5,-.866) circle (1) (0,0) circle (1) (1,0) circle (1);
\end{tikzpicture}
\end{center}
\caption{$i(B(o,1),R(B^o(o,1)))=3 < \infty$}
\label{fig_alpha}
\end{figure}


($\beta$) \emph{If $T$ is a triangle, then we have} $i(T,R(T)) = \infty$.

\begin{proof} We use the second criterion from Corollary~\ref{cor4.2}. The set ${\rm vert}\,(T) \cap {\rm bd}(B(c_T,R(T)))$ consists of two vertices (if $T$ is obtuse) or  three vertices (otherwise) of $T$: $v_1$, $v_2$ (, $v_3$). The set $T + (c_T-v_i)$ covers at most an angular sector from $B^o(c_T,\delta)$, whose size $\gamma_i$ is that of the angle of $T$ at $v_i$ (see Figure~\ref{fig_beta}).
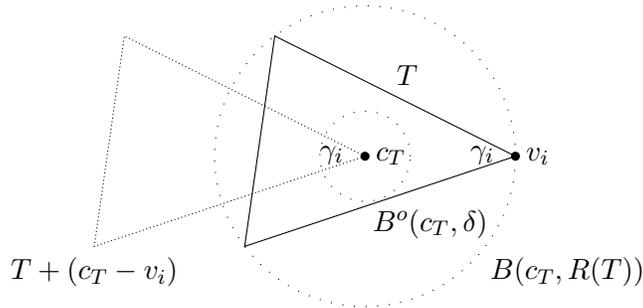
\begin{figure}
\begin{center}
\begin{tikzpicture}[scale=.4]
  \draw[loosely dotted] (0,0) circle (5) (0,0) circle (1.5);
  \draw (5,0) -- (-3,4) -- (-4,-3) -- cycle (0,0) node[right] {\small $c_T$} (5,0) node[right] {\small $v_i$} (-1.1,.05) node {\small $\gamma_i$} (3.9,.05) node {\small $\gamma_i$} (-.1,-1.4) node[below right] {\small $B^o(c_T,\delta)$} (3.8,-3) node[below right] {\small $B(c_T,R(T))$} (.7,2) node[above right] {\small $T$} (-9,-3) node[below] {\small $T+(c_T-v_i)$};
  \fill (0,0) circle (.15) (5,0) circle (.15);
  \draw[densely dotted] (0,0) -- (-8,4) -- (-9,-3) -- cycle;
\end{tikzpicture}
\end{center}
\caption{Proof of $i(T,R(T))= \infty$}
\label{fig_beta}
\end{figure}
Since the angles of $T$ sum up to $\pi$, the full circular disc $B^o(c_T,\delta)$ cannot be covered by $\bigcup \limits_{v \in {\rm vert}\,(T) \cap {\rm bd}\,(B(c_T,R(T)))} T + (c_T - v)$. Now Corollary~\ref{cor4.2} gives $i(T,R(T)) = \infty$.
\end{proof}

($\gamma$) \emph{A convex quadrangle $Q$ satisfies $i(Q,R(Q)) < \infty$ if and only if $Q$ is a rectangle. In that case we have $i(Q,R(Q)) = 4$.}

\begin{proof}
``$\Leftarrow$'': The illumination of a rectangle $Q$ is illustrated in Figure~\ref{fig_gamma1}. $c_Q$ is the midpoint, and $R(Q)$ is half the length of a diagonal.
\begin{figure}
\begin{center}
\begin{tikzpicture}[scale=.5]
  \draw (3,2) -- (-3,2) -- (-3,-2) -- (3,-2) -- cycle (0,0) node[below right] {\small $c_Q$} (3,2) node[right] {\small $v_1$} (3,2) node[right] {\small $v_1$} (-3,2) node[left] {\small $v_2$} (-3,-2) node[left] {\small $v_3$} (3,-2) node[right] {\small $v_4$} (1.5,-2) node[below] {\small $Q$} (6,2.5) node[right] {\small $Q+(c_Q-v_3)$} (-6,2.5) node[left] {\small $Q+(c_Q-v_4)$} (-6,-2.5) node[left] {\small $Q+(c_Q-v_1)$} (6,-2.5) node[right] {\small $Q+(c_Q-v_2)$};
  \fill (0,0) circle (.12) (3,2) circle (.12) (-3,2) circle (.12) (-3,-2) circle (.12) (3,-2) circle (.12);
  \draw[densely dotted] (0,0) -- (0,4) -- (-6,4) -- (-6,0) -- cycle (0,0) -- (0,-4) -- (6,-4) -- (6,0) -- cycle (0,4) -- (6,4) -- (6,0) (0,-4) -- (-6,-4) -- (-6,0);
\end{tikzpicture}
\end{center}
\caption{Illumination of a rectangle $Q$}
\label{fig_gamma1}
\end{figure}
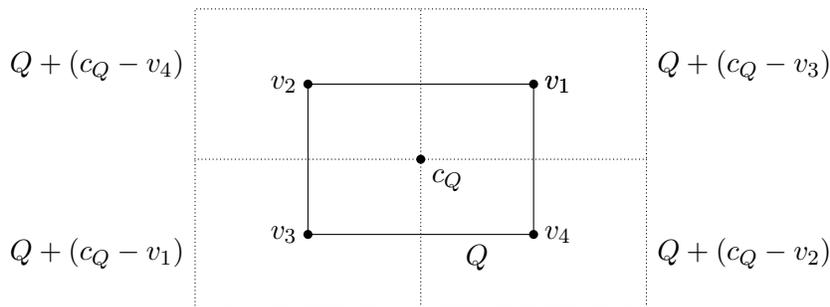

``$\Rightarrow$'': Let $v_1,v_2,v_3,v_4$ be the vertices of $Q$ and $\gamma_1,\gamma_2,\gamma_3,\gamma_4$ the corresponding angle measures.
By Corollary~\ref{cor4.2}, $i(Q,R(Q)) < \infty$ yields
\[
\exists \delta > 0\,\left( B^o (c_Q,\delta)
\subseteq \bigcup  \limits_{v \in \{v_1,v_2,v_3,v_4\}\cap {\rm bd}\,(B(c_Q,R(Q)))} Q + (c_Q-v) \right)\,.
\]

Like in $(\beta)$ one sees that $Q+(c_Q - v_i)$ covers a sector of $B^o(c_Q,\delta)$ of size $\gamma_i$. From this and
$\gamma_1+\gamma_2+\gamma_3+\gamma_4=2\pi$ it follows that one needs all four angles. Therefore $Q$ is a cyclic quadrangle with circumscribed circle
${\rm bd}\,(B(c_Q,R(Q)))$, and the translates of $Q$'s four angles to $c_Q$ have disjoint interiors and only four bounding rays. We obtain
\[
\gamma_1 + \gamma_2 = \gamma_2 + \gamma_3 = \gamma_3 + \gamma_4 = \gamma_4 + \gamma_1 =\pi
\]
(see Figure~\ref{fig_gamma2}).
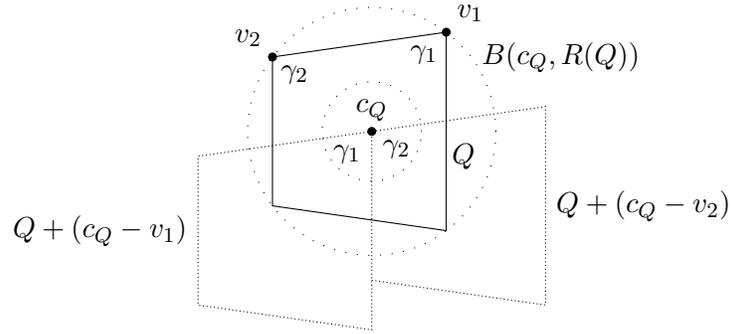
\begin{figure}
\begin{center}
\begin{tikzpicture}[scale=.33]
  \draw[loosely dotted] (0,0) circle (5) (0,0) circle (2);
  \draw (3,4) -- (-4,3) -- (-4,-3) -- (3,-4) -- cycle (0,0) node[above] {\small $c_Q$} (3,4) node[above right] {\small $v_1$} (-4,3) node[above left] {\small $v_2$} (0,-.1) node[below left] {\small $\gamma_1$} (0,.1) node[below right] {\small $\gamma_2$} (3.1,3.9) node[below left] {\small $\gamma_1$} (-4.1,3.1) node[below right] {\small $\gamma_2$} (2.8,-1) node[right] {\small $Q$} (4,3) node[right] {\small $B(c_Q,R(Q))$} (7,-3) node[right] {\small $Q+(c_Q-v_2)$} (-7,-4) node[left] {\small $Q+(c_Q-v_1)$};
  \fill (0,0) circle (.18) (3,4) circle (.18) (-4,3) circle (.18);
  \draw[densely dotted]  (0,0) -- (-7,-1) -- (-7,-7) -- (0,-8) -- cycle (0,-6) -- (7,-7) -- (7,1) -- (0,0);
\end{tikzpicture}
\end{center}
\caption{Proof of $\gamma_1+\gamma_2=\pi$}
\label{fig_gamma2}
\end{figure}
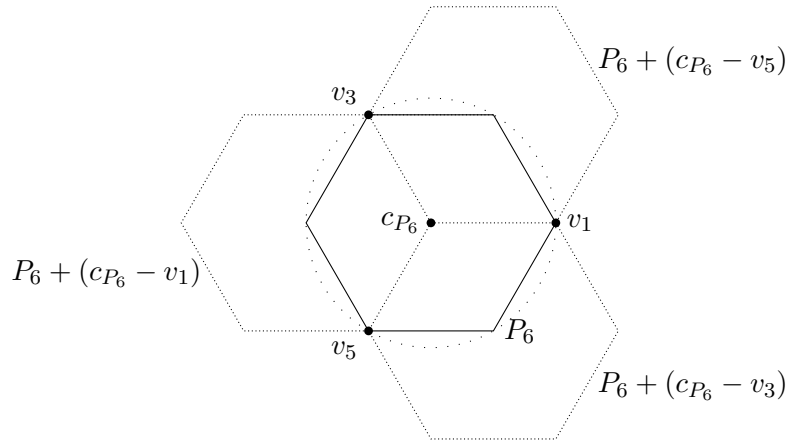
\begin{figure}
\begin{center}
\begin{tikzpicture}[scale=1.66]
  \draw[loosely dotted] (0,0) circle (1);
  \draw (1,0) -- (.5,.866) -- (-.5,.866) -- (-1,0) -- (-.5,-.866) -- (.5,-.866) -- cycle (0,0) node[left] {\small $c_{P_6}$} (1,0) node[right] {\small $v_1$} (-.5,.866) node[above left] {\small $v_3$} (-.5,-.866) node[below left] {\small $v_5$} (.5,-.866) node[right] {\small $P_6$} (1.25,1.3) node[right] {\small $P_6+(c_{P_6}-v_5)$} (1.25,-1.3) node[right] {\small $P_6+(c_{P_6}-v_3)$} (-1.75,-.4) node[left] {\small $P_6+(c_{P_6}-v_1)$};
  \fill (0,0) circle (.035) (1,0) circle (.035) (-.5,.866) circle (.035) (-.5,-.866) circle (.035);
  \draw[densely dotted]  (0,0) -- (1,0) -- (1.5,.866) -- (1,1.732) -- (0,1.732) -- (-.5,.866) -- cycle (0,0) -- (-.5,-.866) -- (-1.5,-.866) -- (-2,0) -- (-1.5,.866) -- (.5,.866) (1,0) -- (1.5,-.866) -- (1,-1.732) -- (0,-1.732) -- (-.5,-.866);
\end{tikzpicture}
\end{center}
\caption{$i(P_6,R(P_6))=3<\infty$}
\label{fig_delta}
\end{figure}
In addition, for the \emph{cyclic} quadrangle we have
\[
\gamma_1 + \gamma_3 = \gamma_2 + \gamma_4 = \pi\,.
\]
So, necessarily, $\gamma_1 = \gamma_2 = \gamma_3 = \gamma_4 = \frac{\pi}{2}$, and $Q$ is a rectangle.
\end{proof}

($\delta$) \emph{Every regular $n$-gon $P_n, n \ge 4$, satisfies} $i(P_n,R(P_n)) < \infty$.

This follows easily from Corollary~\ref{cor4.2}. For example, $i(P_6,R(P_6)) = 3$ (see Figure~\ref{fig_delta}).
%
%



\begin{thebibliography}{00}
\bibitem{Be-1}{\sc Bezdek, K.:} Hadwiger-Levi's covering problem revisited. \emph{New Trends in Discrete and Computational Geometry}, Algorithms Combin.\ \textbf{10}, Springer, Berlin, 1993, 199-233.
\bibitem{Be-2}{\sc Bezdek, K.:} The illumination conjecture and its extensions. \emph{Period.\ Math.\ Hungar.} \textbf{53} (2006), 59-69.
\bibitem{Be-3}{\sc Bezdek, K.:} \emph{Classical Topics in Discrete Geometry}. CMS Books in Mathematics, Springer, New York, 2010.
\bibitem{Be-B-Ki}{\sc Bezdek, K., B\"{o}r\"{o}czky, K., Kiss, Gy.:} On the successive illumination parameters of convex bodies. \emph{Period.\ Math.\ Hungar.} \textbf{53} (2006), 71-82.
\bibitem{Be-Li}{\sc Bezdek, K., Litvak, A. E.:} On the vertex index of convex bodies. \emph{Adv.\ Math.} \textbf{215} (2007), 626-641.
\bibitem{B-M-S}{\sc Boltyanski, V., Martini, H., Soltan, P.:} \emph{Excursions into Combinatorial Geometry}. Springer, Berlin, Heidelberg, and New York, 1997.
\bibitem{Bo-Ma-So}{\sc Boltyanski, V., Martini, H., Soltan, V.:} On Gr\"{u}nbaum's conjecture about inner illumination of convex bodies. \emph{Discrete\ Comput.\ Geom.} \textbf{22} (1999), 403-410.
\bibitem{De}{\sc Dekster, B. V.:} Illumination and exposition of a convex $n$-di\-men\-sional body depending on its sharpness. \emph{Acta\ Math.\ Sci.\ Ser. B\ Engl. Ed.} \textbf{22} (2002), 189-198.
\bibitem{Ki-We}{\sc Kiss, Gy., de Wet, P. O.:} Notes on the illumination parameters of convex polytopes. \emph{Contrib.\ Discrete\ Math.} \textbf{7} (2012), 58-67.
\bibitem{L-So}{\sc Larman, D., S\'ojka, G.:} On the number of minor illuminations required to cover the boundary of a convex body in $\bR^n$. \emph{Rev.\ Roumaine Math.\ Pures Appl.} \textbf{51} (2006), 21-42.
\bibitem{L-M-S}{\sc Lassak, M., Martini, H., Spirova, M.:} On translative coverings of convex bodies. \emph{Rocky Mountain J.\ Math.}, to appear.
\bibitem{Ma-So}{\sc Martini, H., Soltan, V.:} Combinatorial problems on the illumination of convex bodies. \emph{Aequationes Math.} \textbf{57} (1999), 121-152.
\bibitem{M-W-1}{\sc Martini, H., Wenzel, W.:} Illumination and visibility problems in terms of closure operators. \emph{Beitr\"age Algebra Geom.} \textbf{45} (2004), 607-614.
\bibitem{Ma-We}{\sc Martini, H., Wenzel, W.:} Symmetrization of closure operators and visibility. \emph{Ann. Comb.} \textbf{9} (2005), 431-450.
\bibitem{Mo-Se}{\sc Moreno, J. P., Seeger, A.:} External analysis of boundary points of convex sets: illumination and visibility. \emph{Math. Scand.} \textbf{105} (2009), 265-286.
\bibitem{Nasz}{\sc Nasz\'odi, M.:} Fractional illumination of convex bodies. \emph{Contrib.\ Discrete Math.} \textbf{4} (2009), 83-88.
\bibitem{So}{\sc Soltan, V.:} An illumination problem. \emph{Studia Sci. Math. Hungar.} \textbf{29} (1994), 25-32.
\bibitem{Swa}{\sc Swanepoel, K. J.:} Quantitative illumination of convex bodies and vertex degrees of geometric Steiner minimal trees. \emph{Mathematika} \textbf{52} (2005), 47-52.
\bibitem{Sz}{\sc Szab\'o, L.:} Recent results on illumination problems. \emph{Bolyai Soc.\ Math.\ Stud.} \textbf{6}, Janos Bolyai Math.\ Soc., Budapest, 1997, 207-221.
\bibitem{Sz-Ta}{\sc Szab\'{o}, L., Talata, I.:} An illumination problem for convex polyhedra. \emph{Studia Sci. Math. Hungar.} \textbf{32} (1996), 349-353.
\bibitem{We}{\sc Wei{\ss}bach, B.:} Invariante Beleuchtung konvexer K\"{o}rper. \emph{Beitr\"age Algebra Geom.} \textbf{37} (1996), 9-15.
\end{thebibliography}
\end{document}